     \def\section{\@startsection{section}{1}%
      \z@{.7\linespacing\@plus\linespacing}{.5\linespacing}%
     {\bfseries%\normalfont\scshape
     \centering
     }}
     \def\@secnumfont{\bfseries}
\newtheorem{thm}{Theorem}[section]
\newtheorem{lem}[thm]{Lemma}
\newtheorem{prop}[thm]{Proposition}
\newtheorem{cor}[thm]{Corollary}
\theoremstyle{definition}
\theoremstyle{remark}
\newtheorem{rem}[thm]{Remark}
\numberwithin{equation}{section}
\newcommand{\vertiii}[1]{{\left\vert\kern-0.25ex\left\vert\kern-0.25ex\left\vert
#1 \right\vert\kern-0.25ex\right\vert\kern-0.25ex\right\vert}}
\def \e   {\text {\rm e}}
\def \In   {\text {\rm In}}
\def \qed   {\hfill \vrule height6pt width 6pt depth 0pt}
\begin{document}
\title[]{Hadamard powers of some positive matrices}

%\author[Rajendra Bhatia]{Rajendra Bhatia}
%\address{Indian Statistical Institute\\ New Delhi 110016, India}

%\email{rbh@isid.ac.in}

\author[Tanvi Jain]{Tanvi Jain}
\address{Indian Statistical Institute\\ New Delhi 110016, India}
\email{tanvi@isid.ac.in}

\subjclass[2010]{ 15B48, 15A45}

\keywords{Positive semidefinite, Hadamard power, strictly sign regular.}

% \begin{left}
% (Work in progress)\\
% May 2014
% \end{left}

\begin{abstract}
Positivity properties of the Hadamard powers of the matrix $\begin{bmatrix}1+x_ix_j\end{bmatrix}$ for distinct positive real numbers $x_1,\ldots,x_n$
 and the matrix $\begin{bmatrix}|\cos((i-j)\pi/n)|\end{bmatrix}$ are studied. 
In particular, it is shown that $\begin{bmatrix}(1+x_ix_j)^r\end{bmatrix}$ is not positive semidefinite for any positive real number $r<n-2$
 that is not an integer, and $\begin{bmatrix}|\cos((i-j)\pi/n)|^r\end{bmatrix}$ is positive semidefinite for every odd integer $n\ge 3$ and $n-3\le r<n-2.$
\end{abstract}

\maketitle

\section{Introduction}
\vskip.2in
Positive definite matrices are fundamental objects of study in matrix analysis and have applications in diverse areas such as engineering, statistics, quantum information,
medical imaging and mechanics.
 A classical problem in matrix analysis involves the study of functions that act entrywise on matrices and preserve positivity. 
See, for instance, \cite{gkr, gkr1, h, ho, jw, r, s}. In particular, the study of entrywise power functions $t\mapsto t^r$ has been of special interest to various mathematicians; see \cite{be, fh, gkr, h}.

According to a theorem of Schur, the $m$th Hadamard power $A^{\circ m}=\begin{bmatrix}a_{ij}^m\end{bmatrix}$ of a positive semidefinite  matrix $A=\begin{bmatrix}a_{ij}\end{bmatrix}$ is again positive semidefinite for every positive integer $m.$ A positive semidefinite matrix is called {\it doubly nonnegative} if all its entries are nonnegative. If $A$ is a doubly nonnegative matrix and $r$ is a positive real number, then the $r$th {\it Hadamard power} of $A$ is the matrix $A^{\circ r}=\begin{bmatrix}a_{ij}^r\end{bmatrix}.$

FitzGerald and Horn \cite{fh} extensively studied the Hadamard powers of doubly nonnegative matrices.
 They showed that $n-2$ is the $`$critical exponent' for $n\times n$ doubly nonnegative matrices, i.e.,
$n-2$ is the least number for which $A^{\circ r}$ is doubly nonnegative for every $n\times n$ doubly nonnegative matrix $A$ and $r\ge n-2.$
 They also showed that for every positive real number $r<n-2$ that is not
an integer, we can find a doubly nonnegative matrix whose $r$th Hadamard power is not positive semidefinite.

If $A$ has arbitrary real (not necessarily nonnegative) entries, we consider a natural extension of real Hadamard powers.
For a real positive semidefinite matrix $A$ and a positive real number $r,$ we denote the matrix $\begin{bmatrix}|a_{ij}|^r\end{bmatrix}$ by $|A|_\circ^{\circ r}.$
In particular if $r=1,$ then we denote the matrix $\begin{bmatrix}|a_{ij}|\end{bmatrix}$ by $|A|_\circ.$
If $A$ is a $2\times 2$ real positive semidefinite matrix, then $|A|_\circ^{\circ r}$ is positive semidefinite for all positive real $r.$ 
But this is not true for higher dimensions. In fact, for every positive real $r$ that is not an even integer, 
we can find a real positive semidefinite matrix $A$ for which $|A|_\circ^{\circ r}$ is not positive semidefinite. (See \cite{be}).
When $r$ is an even positive integer, then $a_{ij}^r=|a_{ij}|^r.$ Hence by Schur's theorem, $|A|_\circ^{\circ r}$ is positive semidefinite in this case.
 Hiai \cite{h} proved an analogue of the theorem of FitzGerald and Horn for $n\times n$ real positive semidefinite matrices.
He showed that
for every $n\times n$ real positive semidefinite matrix $A,$ $n-2$ is the least number for which $|A|_\circ^{\circ r}$ is positive semidefinite for all $r\ge n-2.$
% in this case also, the critical exponent is $n-2.$

Recently there has been a renewed interest in the study of positivity properties of Hadamard powers of positive semidefinite matrices. 
Motivated by problems occurring in statistics, Guilllot, Khare and Rajaratnam have been studying various problems related to Hadamard powers. See \cite{gkr, gkr1}. 

If $r<n-2,$ there are two classes of examples in the literature.
 FitzGerald and Horn \cite{fh} considered the $n\times n$ doubly nonnegative matrix $A$ with $i,j$th entry $(1+\varepsilon ij)$ and showed that if $r$ is not an integer and $0<r<n-2,$ then $A^{\circ r}$ is not positive semidefinite for a sufficiently small positive number $\varepsilon.$ In this paper we show that this remains true if we replace $\varepsilon ij$ with $x_ix_j$ for any distinct positive real numbers $x_1,\ldots,x_n.$

\begin{thm}\label{thm1}
Let $x_1,\ldots,x_n$ be distinct positive real numbers. Let $X$ be the $n\times n$ matrix
\begin{equation}
X=\begin{bmatrix}1+x_ix_j\end{bmatrix}.\label{eq1}
\end{equation}
Then $X^{\circ r}$ is positive semidefinite if and only if $r$ is a nonnegative integer or $r>n-2.$
\end{thm}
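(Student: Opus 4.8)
The "if" direction is immediate: if $r$ is a nonnegative integer, then $X^{\circ r}$ is positive semidefinite by Schur's theorem applied to the doubly nonnegative matrix $X$ (note $X = \mathbf{1}\mathbf{1}^T + vv^T$ with $v = (x_1,\ldots,x_n)^T$ is manifestly positive semidefinite, in fact positive definite for $n\le 2$ and rank $2$ otherwise), and if $r > n-2$, then positive semidefiniteness follows from the FitzGerald–Horn critical exponent theorem quoted in the introduction, since $X$ is doubly nonnegative. So the whole content is the "only if" direction: I must show that whenever $0 < r < n-2$ and $r\notin\mathbb{Z}$, the matrix $X^{\circ r} = \bigl[(1+x_ix_j)^r\bigr]$ fails to be positive semidefinite.

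The plan is to exhibit an explicit test vector $u\in\mathbb{R}^n$ with $u^T X^{\circ r} u < 0$. The natural idea is to expand each entry by the generalized binomial series $(1+x_ix_j)^r = \sum_{k=0}^{\infty}\binom{r}{k}(x_ix_j)^k = \sum_{k=0}^\infty \binom{r}{k} x_i^k x_j^k$, so that $X^{\circ r} = \sum_{k=0}^\infty \binom{r}{k}\, w_k w_k^T$ where $w_k = (x_1^k,\ldots,x_n^k)^T$. Thus $u^T X^{\circ r} u = \sum_{k=0}^\infty \binom{r}{k}\,(u^T w_k)^2$. Since $r$ is not an integer, the binomial coefficients $\binom{r}{k}$ are nonzero for all $k$, and their signs eventually alternate: for $k > r$ one has $\operatorname{sgn}\binom{r}{k} = (-1)^{k-\lceil r\rceil}$. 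In particular $\binom{r}{k} < 0$ for some small $k$; the first negative one occurs at $k = \lceil r\rceil$ when $\lceil r\rceil$ has the right parity, but more robustly, among $k = 0,1,\ldots,n-1$ there is at least one index $k_0$ with $\binom{r}{k_0} < 0$ because $r < n-2$ forces $\lceil r \rceil \le n-2 < n$, so $k_0 = \lceil r\rceil \le n-1$ works. Now choose $u$ to be the (up to scaling unique) nonzero vector orthogonal to $w_k$ for all $k\in\{0,1,\ldots,n-1\}\setminus\{k_0\}$: this is a system of $n-1$ linear conditions on $u\in\mathbb{R}^n$, and because $x_1,\ldots,x_n$ are distinct the vectors $w_0,\ldots,w_{n-1}$ are the rows of a Vandermonde matrix, hence linearly independent, so such a $u$ exists and moreover $u^T w_{k_0}\ne 0$ (else $u$ would be orthogonal to a basis). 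With this choice, $u^T w_k = 0$ for every $k < n$ except $k = k_0$, so

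\begin{equation}
u^T X^{\circ r} u = \binom{r}{k_0}\,(u^T w_{k_0})^2 \;+\; \sum_{k\ge n}\binom{r}{k}\,(u^T w_k)^2.
\end{equation}

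The first term is strictly negative, so the remaining work is to show the tail $\sum_{k\ge n}\binom{r}{k}(u^T w_k)^2$ can be controlled. The cleanest route is a rescaling trick exactly as in FitzGerald–Horn: replace each $x_i$ by $\varepsilon x_i$ for small $\varepsilon > 0$ (this preserves distinctness and positivity), which replaces $w_k$ by $\varepsilon^k w_k$ and hence multiplies the $k$-th term by $\varepsilon^{2k}$. After dividing through by $\varepsilon^{2k_0}$, the distinguished term is the fixed negative constant $\binom{r}{k_0}(u_\varepsilon^T w_{k_0})^2$ (where $u_\varepsilon$ is the corresponding test vector; one checks its dependence on $\varepsilon$ is benign, e.g. normalize $\|u_\varepsilon\|=1$), while the tail is $O(\varepsilon^{2(n-k_0)})\to 0$ as $\varepsilon\to 0$, using the crude bound $|\binom{r}{k}|\le 1$ for $k>r$ together with convergence of $\sum_k (u^T w_k)^2 x$-type geometric majorant for $\varepsilon\max_i x_i < 1$. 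Hence for $\varepsilon$ small enough the quadratic form is negative, proving $X^{\circ r}$ (for the rescaled $x_i$'s — equivalently, for suitable distinct positive reals) is not positive semidefinite.

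The main obstacle, and the point deserving the most care, is making the last estimate uniform: one must verify that the test vector $u_\varepsilon$ does not degenerate as $\varepsilon\to 0$ and that $(u_\varepsilon^T w_{k_0})^2$ stays bounded away from $0$, so that the negative term genuinely dominates. A clean way to handle this is to fix the $x_i$'s once and for all, not rescale them, and instead argue directly: scale the original $x_i \mapsto x_i$ but observe that the claim for one tuple of distinct positive reals with small enough spread implies the general claim, OR alternatively work with the limiting vector $u_0$ (orthogonal to $w_k$, $k\ne k_0$, in $\mathbb{R}^n$ with the *fixed* $x_i$) and note $u_0^T w_{k_0}\ne 0$ strictly, then absorb the $\varepsilon$-perturbation of $u_\varepsilon$ into the error term by continuity. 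I would also need the elementary sign/parity fact that $\binom{r}{k_0} < 0$ for the chosen $k_0\le n-1$; this is where $r\notin\mathbb{Z}$ and $r < n-2$ are both used — if $r$ were an integer the series terminates and all surviving coefficients through the relevant range are positive, and if $r \ge n-2$ the first negative coefficient $\binom{r}{\lceil r\rceil}$ (or $\binom{r}{\lfloor r\rfloor+1}$) sits at an index $\ge n-1$, so it can no longer be isolated by a vector in $\mathbb{R}^n$ — consistent with the theorem being sharp.
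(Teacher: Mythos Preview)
Your argument has a genuine gap that goes to the heart of what Theorem~1.1 asserts. The theorem says that \emph{for every} tuple of distinct positive reals $x_1,\ldots,x_n$, the matrix $X^{\circ r}$ fails to be positive semidefinite when $0<r<n-2$ and $r\notin\mathbb{Z}$. Your binomial-series plus rescaling argument only establishes this for tuples with $\max_i x_i$ small enough that the series $\sum_k\binom{r}{k}(x_ix_j)^k$ converges \emph{and} the tail $\sum_{k\ge n}\binom{r}{k}(u^Tw_k)^2$ is dominated by the distinguished negative term. Replacing $x_i$ by $\varepsilon x_i$ changes the tuple; there is no congruence carrying $[(1+\varepsilon^2 x_ix_j)^r]$ back to $[(1+x_ix_j)^r]$, so your sentence ``the claim for one tuple of distinct positive reals with small enough spread implies the general claim'' is simply false. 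The alternative you sketch (``absorb the $\varepsilon$-perturbation of $u_\varepsilon$ into the error term'') is circular: once the $x_i$ are fixed there is no $\varepsilon$ left to send to zero, and for large $x_i$ the tail terms $(u^Tw_k)^2$ with $k\ge n$ can dwarf the single negative term. In short, what you have written recovers exactly the FitzGerald--Horn existence statement (some tuple works), not the universal statement of Theorem~1.1. There is also a minor sign slip: for $m<r<m+1$ one has $\binom{r}{m+1}>0$ since the last factor $r-m$ is positive; the first negative coefficient is $\binom{r}{m+2}$, so your $k_0=\lceil r\rceil$ should be $k_0=\lfloor r\rfloor+2$. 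This is harmless for the inequality $k_0\le n-1$ (since $r<n-2$ gives $\lfloor r\rfloor\le n-3$), but it shows the parity bookkeeping needs care.

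The paper's proof avoids all of this by a completely different route: it shows, via a reduction to matrices $[(q_i+p_j)^r]$ and a zero-counting argument for functions $\sum_j c_j(x+p_j)^r$, that $X^{\circ r}$ is strictly sign regular with an explicitly computed signature $(\varepsilon'_{k,r})$. One then reads off directly that every $(2k+3)\times(2k+3)$ principal minor of $X^{\circ r}$ is \emph{negative} when $2k<r<2k+1\le n-2$ (and similarly on the other parity interval). This sign is independent of the particular $x_i$'s, which is exactly what your approach is missing.
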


Bhatia and Elsner \cite{be} studied another interesting class of $n\times n$ positive semidefinite Toeplitz matrices with real entries
\begin{equation}
C=\begin{bmatrix}\cos\bigl(\frac{(i-j)\pi}{n}\bigr)\end{bmatrix}.\label{eq2}
\end{equation}
They showed that for every even positive integer $n,$ the matrix $|C|_\circ^{\circ r}$ is not positive semidefinite if $n-4<r<n-2.$ The case when $n$ is odd remained open. In the next theorem we address this case.

\begin{thm}\label{thm2}
Let $n\ge 3$ be an odd integer and let $C$ be the $n\times n$ matrix given by \eqref{eq2}. Then $|C|_\circ^{\circ r}$ is positive semidefinite if and only if $r$ is a nonnegative even integer or $r>n-3.$
\end{thm}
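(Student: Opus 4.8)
The plan is to use that $|C|_\circ^{\circ r}$ is a real symmetric \emph{circulant} matrix. Indeed $|\cos((i-j)\pi/n)|$ depends only on $i-j$ modulo $n$, since $|\cos((k+n)\pi/n)|=|\cos(k\pi/n+\pi)|=|\cos(k\pi/n)|$; so $|C|_\circ^{\circ r}$ is the circulant with first row $(|\cos(k\pi/n)|^{r})_{k=0}^{n-1}$, and it is positive semidefinite exactly when all of its eigenvalues
\[
\mu_s(r)=\sum_{k=0}^{n-1}|\cos(k\pi/n)|^{r}\cos\!\Bigl(\frac{2\pi ks}{n}\Bigr),\qquad s=0,1,\dots,n-1,
\]
are nonnegative; and since $\mu_s=\mu_{n-s}$ it suffices to treat $s=0,1,\dots,(n-1)/2$. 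If $r$ is a nonnegative even integer there is nothing to do: then $|C|_\circ^{\circ r}=C^{\circ r}$, and since $C=uu^{T}+vv^{T}$ with $u=(\cos(i\pi/n))_i$, $v=(\sin(i\pi/n))_i$ is positive semidefinite, Schur's theorem applies. So assume henceforth that $r>0$ is not an even integer; this is where the work lies.

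The second step is to rewrite $\mu_s(r)$ via the Fourier coefficients of the $\pi$-periodic function $\theta\mapsto|\cos\theta|^{r}$. Writing $|\cos\theta|^{r}=\sum_{m\in\mathbb Z}c_m(r)e^{2im\theta}$ and summing over the sample points $\theta=k\pi/n$ yields the aliasing identity $\mu_s(r)=n\sum_{l\in\mathbb Z}c_{s+ln}(r)$. From the classical evaluation
\[
c_m(r)=\frac{\Gamma(r+1)}{2^{\,r}\,\Gamma\!\bigl(1+\tfrac r2+m\bigr)\Gamma\!\bigl(1+\tfrac r2-m\bigr)},\qquad\text{so}\qquad \frac{c_{m+1}(r)}{c_m(r)}=\frac{\tfrac r2-m}{1+\tfrac r2+m},
\]
one reads off the structural facts needed: $c_0(r)>0$; the magnitude $|c_m(r)|$ is strictly decreasing in $m$ (the ratio has absolute value $<1$); and $c_m(r)>0$ for $0\le m\le P:=\lfloor r/2\rfloor+1$, while $c_m(r)$ has sign $(-1)^{m-P}$ for all $m\ge P$ (the ratio is negative past $m=r/2$). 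Everything then comes down to controlling, for each $s$, the aliased series $\sum_l c_{s+ln}(r)$ using this sign pattern and this monotone decay.

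For $0<r<n-3$ I would prove $|C|_\circ^{\circ r}$ is \emph{not} positive semidefinite by exhibiting the negative eigenvalue $\mu_{s^{*}}(r)$ with $s^{*}:=P+1$. The hypothesis $r<n-3$ forces $\lfloor r/2\rfloor\le(n-5)/2$, hence $2\le s^{*}\le(n-1)/2$ (a legitimate index) and $n-2s^{*}\ge1$. Now $c_{s^{*}}(r)=c_{P+1}(r)<0$, and for every $l\ge1$ both $c_{ln+s^{*}}(r)$ and $c_{ln-s^{*}}(r)$ have sign $(-1)^{l+1}$ (their indices exceed $P$, and $n$ is odd); regrouping the aliased sum,
\[
\tfrac1n\,\mu_{s^{*}}(r)=-\Bigl(|c_{P+1}(r)|-e_1+e_2-e_3+\cdots\Bigr),\qquad e_l:=|c_{ln+s^{*}}(r)|+|c_{ln-s^{*}}(r)|,
\]
where $(e_l)_{l\ge1}$ is strictly decreasing (here $n-2s^{*}\ge1$ is used), so the alternating tail lies strictly between $0$ and $e_1$. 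It therefore suffices to establish the inequality
\[
|c_{P+1}(r)|\ \ge\ |c_{n-P-1}(r)|+|c_{n+P+1}(r)|,
\]
and proving this is the crux of the matter: one must show that the decay of $|c_m(r)|$ from index $P+1$ out to $n-P-1$, governed by the product $\prod_{m=P+1}^{n-P-2}(m-\tfrac r2)/(m+\tfrac r2+1)$ — whose first factor is less than $2/(2P+1)$ — is fast enough that the loss over its $n-2P-2\ge1$ factors, together with the far term $|c_{n+P+1}(r)|$, cannot recover the deficit. The precise value $P=\lfloor r/2\rfloor+1$ and the odd parity of $n$ enter exactly here.

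It remains to treat $r>n-3$ with $r$ not an even integer — the only positive case not covered by Schur's theorem (which handles $r=n-3$ and all even $r$). Here $P\ge(n-1)/2$, so $c_m(r)>0$ for every $0\le m\le(n-1)/2$; thus in the aliased sum for $\mu_s(r)$ the leading term $c_s(r)$ is positive, as is every term whose index does not exceed $P$, while the remaining terms alternate in sign and have absolute values dominated by $|c_{P+1}(r)|$. I would finish by splitting into the regime where $r$ is close to $n-3$ (so $P\approx(n-1)/2$: only $c_s(r)$ among the aliased terms is large, and one repeats the previous estimate to get $|c_{P+1}(r)|<c_s(r)$) and the regime of large $r$ (where many aliased terms are positive and, since all indices exceeding $P$ lie well beyond $r/2$, the alternating remainder is exponentially small); in both, $\mu_s(r)\ge0$. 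I expect the inequality displayed above, and the bookkeeping in these two regimes, to be the only genuinely delicate points.
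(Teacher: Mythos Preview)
Your route is genuinely different from the paper's, and the contrast is instructive.

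The paper also starts from the circulant structure and the eigenvalue formulas $\lambda_j(r)=1+2\sum_{k=1}^{m}\cos(2kj\pi/n)\,\cos^r(k\pi/n)$, but then treats each $\lambda_j$ as a function of $r$ rather than estimating it at a fixed $r$. Writing $\lambda_j(r)$ as a generalized Dirichlet polynomial $\sum a_k e^{\mu_k r}$ with $\mu_k=\log\cos(k\pi/n)$, the Descartes rule of signs bounds the number of zeros of $r\mapsto\lambda_j(r)$ by the number of sign changes in the coefficient tuple $(1,2\cos(2j\pi/n),\ldots,2\cos(2mj\pi/n))$, which is exactly $j$ for $1\le j\le m$. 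Summing over $j$ (using $\lambda_j=\lambda_{n-j}$) gives at most $m(m+1)$ zeros in total. On the other hand, a rank computation shows that at each even integer $r=2k<n$ the matrix has exactly $n-2k-1$ zero eigenvalues; summing over $k=0,\ldots,m-1$ gives exactly $m(m+1)$ zeros. So the Descartes bound is saturated, every $\lambda_j$ has \emph{all} of its zeros at even integers, and in particular $|C|_\circ^{\circ r}$ is nonsingular off the even integers. Positivity for $r>n-3$ then follows from Hiai's theorem (valid for $r\ge n-2$) and continuity across the nonsingular interval $(n-3,n-2)$; non-positivity for $0<r<n-3$ follows because each $\lambda_j$ must change sign at its last zero $r=2(j-1)$.

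Your approach, by contrast, would prove the sign of $\mu_{s^*}(r)$ directly from the aliased Fourier expansion. You have correctly identified the culprit: your index $s^*=\lfloor r/2\rfloor+2$ is precisely the $j$ for which the paper shows $\lambda_j(r)<0$ on $(2j-4,2j-2)$. If your programme succeeds it is self-contained---no appeal to Hiai---and it explains \emph{why} that particular eigenvalue goes negative. The cost is that you must actually prove the inequality $|c_{P+1}(r)|\ge |c_{n-P-1}(r)|+|c_{n+P+1}(r)|$, and you have only sketched this (noting the first ratio factor is below $2/(2P+1)$ is suggestive but not a proof; the case $P=(n-3)/2$, where $n-P-1=P+2$, is the tight one). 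Likewise your treatment of $r>n-3$ is a two-regime outline rather than an argument. The paper's zero-counting trick sidesteps every one of these estimates; that is what it buys.
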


We prove Theorem \ref{thm1} in Section 2. An essential ingredient of our proof comes from a recent analysis of the eigenvalue behaviour of a family of special matrices that was initiated in \cite{bj}. We continue with this analysis in Section 2, and prove some related results on the way. Theorem \ref{thm2} is proved in Section 3.
\vskip.2in
\section{Proof of Theorem \ref{thm1}}
\vskip.2in
Let $p_1,\ldots,p_n$ be distinct positive real numbers and let $r$ be a nonnegative real number. Let $P_r$ be the $n\times n$ real symmetric matrix
\begin{equation}
P_r=\begin{bmatrix}(p_i+p_j)^r\end{bmatrix}.\label{eq21}
\end{equation}
The numbers of positive and negative eigenvalues of $P_r$ were computed for all real $r$ in \cite{bj}. The next proposition follows from Theorem 1 of \cite{bj}.

\begin{prop}\label{prop3}
Let $P_r$ be the $n\times n$ matrix given by \eqref{eq21}. 
For each $r\ge 0$ the sign of the determinant of $P_r$ is given by $\varepsilon_{n,r}$ where
\begin{equation}
\varepsilon_{n,r}=\begin{cases}
(-1)^{\lfloor n/2\rfloor}  & \textrm{ if $r>n-2$}\\
(-1)^{n-k+1} & \textrm{ if $2k<r<2k+1\le n-2,$ $k\ge 0$ is an integer}\\
(-1)^{k+1} & \textrm{ if $2k+1<r<2k+2\le n-2,$ $k\ge 0$ is an integer}.
\end{cases}\label{eq22}
\end{equation}
If $r=0,1,\ldots,n-2,$ the determinant is zero. In this case, we take $\varepsilon_{n,r}=0.$
\end{prop}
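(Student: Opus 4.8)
The plan is to read the statement off the inertia computation of \cite{bj}, since the sign of a determinant is a coarse invariant of the inertia. Concretely, for a \emph{nonsingular} real symmetric matrix $M$ one has $\sgn(\det M)=(-1)^{\nu(M)}$, where $\nu(M)$ is the number of negative eigenvalues of $M$, while $\det M=0$ exactly when $M$ is singular. Theorem~1 of \cite{bj} supplies, for every real $r$, the full inertia $(\pi(P_r),\nu(P_r),\delta(P_r))$ of $P_r$; so it remains to extract from it (i) for which $r\ge 0$ the number $\delta(P_r)$ of zero eigenvalues is positive, and (ii) for the remaining $r$, the parity of $\nu(P_r)$.

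For (i) I would argue directly, as it also anchors (ii). If $r=m$ is a nonnegative integer, then $(p_i+p_j)^m=\sum_{k=0}^m\binom{m}{k}p_i^{k}p_j^{m-k}$, so that $P_m=U_m D_m U_m^{\mathsf T}$ with $U_m=[\,u_0\mid u_1\mid\cdots\mid u_m\,]$, $u_k=(p_1^{k},\dots,p_n^{k})^{\mathsf T}$, and $D_m$ the $(m{+}1)\times(m{+}1)$ anti-diagonal matrix carrying the positive entries $\binom{m}{k}$. As the $p_i$ are distinct, $U_m$ contains a Vandermonde block and has rank $\min(m+1,n)$, so $\mathrm{rank}\,P_m=\min(m+1,n)$, which is strictly less than $n$ precisely when $m\le n-2$. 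This yields the last clause of the proposition ($\det P_r=0$ and $\varepsilon_{n,r}=0$ for $r=0,1,\dots,n-2$). For every other $r\ge 0$ the matrix $P_r$ is nonsingular: for an integer $r\ge n-1$ because $U_r$ then has full row rank and $D_r$ is invertible, and for non-integer $r$ because this is part of the \cite{bj} computation (equivalently, a consequence of the integral representation of $t\mapsto t^r$ together with the distinctness of the $p_i$).

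For (ii), on each nonsingular $P_r$ we have $\delta(P_r)=0$ and $\sgn\det P_r=(-1)^{\nu(P_r)}$, and one matches the parity of the $\nu(P_r)$ given by \cite{bj} against the exponents in \eqref{eq22}. The case $r>n-2$ is pinned down by the single value $r=n-1$: there the congruence $P_{n-1}=U_{n-1}D_{n-1}U_{n-1}^{\mathsf T}$ with $U_{n-1}$ an invertible Vandermonde matrix gives $\nu(P_{n-1})=\nu(D_{n-1})=\lfloor n/2\rfloor$, because the anti-diagonal matrix $D_{n-1}$ is congruent to the reversal permutation matrix, which has exactly $\lfloor n/2\rfloor$ eigenvalues equal to $-1$; and since $\delta(P_r)\equiv 0$ on $(n-2,\infty)$, continuity of eigenvalues forces $\nu(P_r)$ to be constant there, so $\sgn\det P_r=(-1)^{\lfloor n/2\rfloor}$, which is the first line of \eqref{eq22}. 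For the two intermediate regimes one reads off $\nu(P_r)$ from \cite{bj} on $2k<r<2k+1$ and on $2k+1<r<2k+2$ (for the relevant $k\ge 0$ with the upper bound $\le n-2$) and reduces it modulo $2$; as a check on the indexing, for $0<r<1$ the standard integral representation shows $P_r$ is negative definite on $\mathbf 1^{\perp}$ and positive in the Perron direction, so $\nu(P_r)=n-1$, matching $(-1)^{n-k+1}$ at $k=0$.

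The only step requiring real care — everything else being a citation of \cite{bj} or the identity $\sgn\det M=(-1)^{\nu(M)}$ — is the parity bookkeeping in (ii): lining up the floor functions appearing in the \cite{bj} inertia formula with the three separate regimes of \eqref{eq22} uniformly in $n$ and $k$, and, underneath that, tracking how the $\delta(P_m)=n-m-1$ zero eigenvalues present at an integer $r=m$ redistribute between positive and negative ones as $r$ leaves $m$, since that redistribution is exactly what selects between the exponents $n-k+1$ and $k+1$.
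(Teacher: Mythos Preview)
Your proposal is correct and takes essentially the same approach as the paper: the paper's entire proof is the single sentence ``The next proposition follows from Theorem 1 of \cite{bj},'' and you likewise reduce everything to the inertia computation of \cite{bj} via $\sgn\det M=(-1)^{\nu(M)}$. Your additional sanity checks (the rank at integer $r$, the explicit congruence at $r=n-1$, the $0<r<1$ case) go beyond what the paper provides, but are consistent with it.
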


A matrix $A$ is said to be {\it strictly sign regular (SSR) with signature} $\varepsilon=(\varepsilon_1,\ldots,\varepsilon_n)$ $(\varepsilon_k\in\{-1,1\})$ if 
\begin{align}
\textrm{every $k\times k$ subdeterminant of $A$ is nonzero with sign $\varepsilon_k,$}\label{eqr1}\tag{$*$}
\end{align}
 for all $1\le k\le n.$ Let $1\le m< n.$ Then the matrix is called {\it SSR}$_m$ with signature $\varepsilon=(\varepsilon_1,\ldots,\varepsilon_m)$ if the condition \eqref{eqr1}
holds for all $1\le k\le m.$ An SSR matrix is called {\it strictly totally positive (STP)} if all $\varepsilon_k$'s equal $1,$ i.e.,
all $k\times k$ subdeterminants are strictly positive. One can refer to \cite{fj, k} for a detailed study of these matrices.  

 Let $p_1<p_2<\cdots<p_n$ and $q_1<q_2<\cdots<q_n$ be positive real numbers. For $r$ in $\mathbb{R},$ let $S_r$ be the $n\times n$ matrix
\begin{equation}
S_r=\begin{bmatrix}(q_i+p_j)^r\end{bmatrix}.\label{eq23}
\end{equation}
In our next theorem we show that the $n\times n$ matrix $S_r$ is an SSR matrix for every real number $r\ne 0,1,\ldots,n-2.$

\begin{thm}\label{thm4}
Let $r$ be a nonnegative real number. The matrix $S_r,$ defined in \eqref{eq23}, is SSR if $r\ne 0,1,\ldots,n-2$ with signature $(\varepsilon_{k,r})_{k\le n},$ and is SSR$_r$ if $r=0,1,\ldots,n-2.$

In particular, the matrix $P_r$ given by \eqref{eq21} is SSR for $r\ne 0,1,\ldots,n-2$ and is SSR$_r$ whenever $r=0,\ldots,n-2.$
\end{thm}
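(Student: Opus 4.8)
The plan is to reduce the statement about $S_r$ to the determinant computation of Proposition 3 via a continuity/variation argument, exploiting the fact that every $k\times k$ submatrix of $S_r$ is itself a matrix of the same shape. Fix $1\le k\le n$ and choose rows $i_1<\cdots<i_k$ and columns $j_1<\cdots<j_k$. The corresponding submatrix is $\bigl[(q_{i_a}+p_{j_b})^r\bigr]_{a,b=1}^k$, which has exactly the form \eqref{eq23} for the $k$ positive numbers $q_{i_1}<\cdots<q_{i_k}$ and $p_{j_1}<\cdots<p_{j_k}$. Hence it suffices to prove: for all choices of positive reals $q_1<\cdots<q_n$ and $p_1<\cdots<p_n$, the full determinant $\det S_r$ is nonzero with sign $\varepsilon_{n,r}$ when $r\ne 0,1,\ldots,n-2$, and vanishes when $r\in\{0,1,\ldots,n-2\}$; applying this conclusion to every $k\le n$ (and, for the SSR$_r$ claim, to every $k\le r$) gives the theorem. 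The final ``in particular'' assertion is the special case $q_i=p_i$, i.e. $S_r=P_r$.

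So the real content is: $\operatorname{sgn}\det S_r=\varepsilon_{n,r}$ for the two-parameter family, knowing it for the one-parameter family $P_r$ (Proposition 3). First I would record that $\det S_r$ is a continuous function of the $2n$ parameters $(q_i)$ and $(p_j)$ on the open connected region $\{q_1<\cdots<q_n,\ p_1<\cdots<p_n\}\subset\mathbb{R}^{2n}_{>0}$. The diagonal slice $q_i=p_i$ lies in this region and there $\det S_r=\det P_r\ne 0$ with sign $\varepsilon_{n,r}$ by Proposition 3, for every fixed $r\notin\{0,\ldots,n-2\}$. Therefore, to conclude $\det S_r$ has the constant sign $\varepsilon_{n,r}$ on the whole region, it is enough to show $\det S_r\ne 0$ throughout the region — then by connectedness and the intermediate value theorem the sign cannot change. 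Thus the crux is the \emph{non-vanishing} of $\det S_r$ for all admissible $q,p$ when $r$ is not one of $0,1,\ldots,n-2$.

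For the non-vanishing I would argue by contradiction: suppose $\det S_r=0$ for some admissible configuration. Then the columns of $S_r$ are linearly dependent, so there is a nonzero vector $c=(c_1,\ldots,c_n)$ with $\sum_{j=1}^n c_j (q_i+p_j)^r=0$ for $i=1,\ldots,n$. Consider the function $f(t)=\sum_{j=1}^n c_j (t+p_j)^r$ on $t>-p_1$ (or on $t>0$, which suffices since all $q_i>0$); it has $n$ distinct zeros at $t=q_1,\ldots,q_n$. Now I would apply the generalized Rolle / Descartes-type bound for such exponential-type sums: differentiating $f$ repeatedly, each derivative is $\partial_t^m f(t)=r(r-1)\cdots(r-m+1)\sum_j c_j(t+p_j)^{r-m}$, and since $r\ne 0,1,\ldots,n-2$ the coefficients $r(r-1)\cdots(r-m+1)$ are nonzero for $0\le m\le n-1$. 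A function of the form $\sum_{j=1}^n c_j (t+p_j)^s$ with the $p_j$ distinct, $c\ne 0$, and $s$ not a nonnegative integer $<n$ has at most $n-1$ zeros on an interval (this is the standard Tchebyshev/sign-change property of the system $\{(t+p_j)^s\}$, provable by induction on $n$ using Rolle's theorem together with the division trick $g(t)=f(t)/(t+p_n)^{?}$ — more cleanly, one divides out and differentiates as above, reducing $n$ by one each time until one reaches an obviously non-identically-zero function with too many zeros). This contradicts $f$ having $n$ distinct zeros $q_1,\ldots,q_n$, so $\det S_r\ne 0$.

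The main obstacle is making the zero-counting step fully rigorous for non-integer (and negative-range) exponents: the clean statement is that $\{(t+p_1)^r,\ldots,(t+p_n)^r\}$ is an extended Tchebyshev system on $(0,\infty)$ precisely when $r\notin\{0,1,\ldots,n-2\}$, and one must handle the inductive reduction carefully because differentiation lowers the exponent through possibly negative values — this is exactly where the hypothesis $r\ne 0,1,\ldots,n-2$ is used (if $r$ were such an integer, some derivative coefficient $r(r-1)\cdots(r-m+1)$ vanishes and the system degenerates, consistent with $\det S_r=0$). For the degenerate case $r\in\{0,1,\ldots,n-2\}$, one notes that $(q_i+p_j)^r$ expands by the binomial theorem into a sum of $r+1\le n-1$ rank-one terms $\binom{r}{\ell}q_i^{\ell}p_j^{r-\ell}$, so $S_r$ has rank at most $r+1\le n-1$ and every $k\times k$ minor with $k\le r$ is still nonzero (being, up to the positive binomial factor, a product of two Vandermonde-type determinants in the $q$'s and $p$'s), giving SSR$_r$ with the stated signature and $\det S_r=0$ for $k>r$. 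I expect the Tchebyshev-system / generalized-Rolle lemma to be the step needing the most care, and it may be cleanest to isolate it as a preparatory lemma before proving Theorem 4.
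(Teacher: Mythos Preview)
Your proposal is correct and follows essentially the same route as the paper: the paper isolates the zero-counting step as a separate Proposition (proved by induction on the number of sign changes of $(c_1,\ldots,c_n)$ via exactly the divide-and-differentiate trick you sketch), then uses your homotopy/continuity argument on the connected parameter region to transfer the sign $\varepsilon_{n,r}$ from $P_r$ to $S_r$, and finally reduces to $k\times k$ submatrices. One small remark on the SSR$_r$ clause for integer $r$: your ``product of two Vandermonde-type determinants'' description is not literally correct (it is a Cauchy--Binet sum over $k$-subsets of $\{0,\dots,r\}$), but the conclusion follows more cheaply by observing that for $k\le r$ one has $r>k-2$, so the main non-vanishing argument already applies at size $k$.
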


To prove Theorem \ref{thm4}, we use the following extension of Theorem 4 of \cite{bj}.

\begin{prop}\label{prop5}
 Let $p_1,\ldots,p_n$ be distinct positive real numbers. Let $c_1, \ldots, c_n$ be real numbers, not all of which are zero. For each real number $r,$ define the function $f_r$ on $(0, \infty)$ as
\begin{equation}
f_r (x) = \sum_{j=1}^{n} \,\,c_j (x + p_j)^r. \label{eq4.7} 
\end{equation}
Then $f_r$ has at most $n-1$ zeros. (Here we follow the convention that the number of zeros of the identically zero function is zero.)
\end{prop}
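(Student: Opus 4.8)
\textbf{Proof proposal for Proposition \ref{prop5}.} The plan is to induct on $n$. The engine of the induction is the observation that after a harmless normalization the derivative of a function of the form \eqref{eq4.7} is again a function of the same form, but with one fewer term and with $r$ replaced by $r-1$; since the assertion is made for \emph{every} real exponent at once, an induction on $n$ can absorb this downward shift of the exponent.

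Before starting the induction I would clear away the degenerate cases. If every $c_j$ is zero, then $f_r\equiv 0$ and the convention gives $0$ zeros; more generally, by deleting the indices with $c_j=0$ (which only decreases the number of terms) I may assume all of $c_1,\dots,c_n$ are nonzero and it is enough to prove the bound $n-1$ in that case. The base case $n=1$ is trivial, since $f_r(x)=c_1(x+p_1)^r$ has no zero on $(0,\infty)$. Finally, if $r=0$ then $f_r\equiv\sum_j c_j$ is constant, hence has at most $0\le n-1$ zeros. So from now on I take $n\ge 2$, $r\ne 0$, all $c_j\ne 0$, and suppose for contradiction that $f_r$ has at least $n$ zeros in $(0,\infty)$.

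The main step is normalize-and-differentiate. Set $g(x)=f_r(x)/(x+p_n)^r$. Since $p_n>0$, the factor $(x+p_n)^r$ is smooth and positive on $(0,\infty)$, so $g$ is smooth there and has exactly the zeros of $f_r$; thus $g$ has at least $n$ zeros, and by Rolle's theorem $g'$ has at least $n-1$ zeros in $(0,\infty)$. A direct computation (differentiating each term $c_j\bigl(\tfrac{x+p_j}{x+p_n}\bigr)^r$) gives
\begin{equation*}
g'(x)=\frac{r}{(x+p_n)^{r+1}}\,\sum_{j=1}^{n-1} c_j\,(p_n-p_j)\,(x+p_j)^{r-1}.
\end{equation*}
Because $r\ne 0$ and $(x+p_n)^{r+1}>0$ on $(0,\infty)$, the zeros of $g'$ there are precisely the zeros of $h(x):=\sum_{j=1}^{n-1} d_j\,(x+p_j)^{r-1}$ with $d_j=c_j(p_n-p_j)$. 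Each $d_j$ is nonzero (as $c_j\ne 0$ and $p_j\ne p_n$) and $p_1,\dots,p_{n-1}$ are distinct positive reals, so the induction hypothesis, applied to the $n-1$ numbers $p_1,\dots,p_{n-1}$ with exponent $r-1$, forces $h$ to have at most $n-2$ zeros, contradicting the fact that it has at least $n-1$. Hence $f_r$ has at most $n-1$ zeros, which closes the induction.

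I do not expect a genuine obstacle: the argument is elementary once the derivative identity is in hand. The points that require care are entirely bookkeeping: reducing to the all-nonzero case so that the inductive count stays tight, isolating $r=0$ (where $g'\equiv 0$ and the scheme degenerates), and noting that division by $(x+p_n)^r$ and $(x+p_n)^{r+1}$ affects neither smoothness nor the zero set on $(0,\infty)$ because $p_n>0$. One should also stress that the induction must be carried out uniformly over all real $r$, since differentiation sends $r$ to $r-1$ and may push it out of any preassigned interval.
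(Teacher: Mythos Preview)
Your argument is correct. The normalize-and-differentiate trick is exactly the engine the paper uses, and your derivative identity for $g(x)=f_r(x)/(x+p_n)^r$ is right; the only delicate point you might make explicit is that if $g'\equiv 0$ then $g$ is a nonzero constant (since $f_r\not\equiv 0$ under your contradiction hypothesis), which already contradicts $g$ having $\ge n\ge 2$ zeros, so $h\not\equiv 0$ and the inductive bound applies cleanly.

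The difference from the paper is in what you induct on. You induct on the number of terms $n$, normalizing at $p_n$ so that one term becomes constant and disappears upon differentiation. The paper instead inducts on the number of sign changes $V(c_1,\dots,c_n)$ and proves the sharper inequality $Z(f_r)\le V(c_1,\dots,c_n)$: it picks $u$ strictly between two consecutive $p_{j-1},p_j$ at which the coefficients change sign, so that after normalizing by $(x+u)^r$ and differentiating, the new coefficients $c_j(p_j-u)$ have one fewer sign change. Your route is shorter and entirely sufficient for the proposition as stated and for its use in showing $S_r$ is nonsingular; the paper's route yields a Descartes-rule-of-signs type refinement, but that extra strength is not invoked later in the paper.
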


\begin{proof}
We denote the number of zeros of the function $f_r$ by $Z(f_r),$ and the number of sign changes in the tuple $(c_1,\ldots,c_n)$ by $V(c_1,\ldots,c_n),$ see \cite{bj}. 
We prove the inequality

\begin{equation*}
Z(f_r)\le V(c_1,\ldots,c_n)\ \textrm{ for all }r\textrm{ in }\mathbb{R},
\end{equation*}
by induction on $V(c_1,\ldots,c_n).$ This can be proved by following arguments similar to those used in the proof of Theorem 4 of \cite{bj}.
We give a brief sketch, referring the reader to \cite{bj} for more details.

The case when $V(c_1,\ldots,c_n)=0$ is trivial. Assume that $Z(f_r)\le k-1$ whenever $V(c_1,\ldots,c_n)=k-1.$ Now suppose that $V(c_1,\ldots,c_n)=k,$ $k>0.$ We can assume that $c_i\ne 0$ for all $i=1,\ldots,n.$ Let $j$ be an index, $1<j\le n,$ such that $c_{j-1}c_j<0,$ and choose a real number $u$ such that $p_{j-1}<u<p_j.$ Consider the function
\begin{equation*}
\varphi (x) = \sum_{j=1}^n \,\,c_j (p_j - u) (x+p_j)^{r-1}. 
\end{equation*}
Note that $V(c_1(p_1-u), c_2(p_2-u),\ldots,c_n(p_n-u))=k-1.$ Hence by the induction hypothesis, $Z(\varphi)\le k-1.$
A computation reveals that
\begin{equation*}
\varphi eq338
x) = \frac{-(x+u)^{r+1}}{s} \,\, h^{\prime}(x),
\varphi (x) = \frac{-(x+u)^{r+1}}{s} \,\, h^{\prime}(x),
\end{equation*}
where $h(x) = \frac{f_r (x)}{(x+u)^r}.$ It is clear that $Z(\varphi)=Z(h^\prime),$ $Z(h)=Z(f_r),$ and by Rolle's theorem $Z(h)\le Z(h^\prime)+1.$ Combining these relations, we obtain $Z(f_r)\le Z(\varphi)+1\le k.$
\end{proof}

\noindent {\it Proof of Theorem \ref{thm4}}. Let $r\ne 0,1,\ldots,n-2.$ The matrix $S_r$ defined in \eqref{eq23} is singular if and only if there exist real numbers $c_1,\ldots,c_n,$ not all of which are zero, satisfying
\begin{equation*}
\sum\limits_{j=1}^{n}c_j(q_i+p_j)^r=0 \textrm{ for all }i=1,\ldots,n.
\end{equation*}
This is possible only when the function $f_r$ defined in \eqref{eq4.7} is either identically zero or has at least $n$ zeros $q_1,\ldots,q_n.$ The function $f_r$ is identically zero if and only if $f_r^{(k)}(x_0)=0$ for all nonnegative integers $k$ and for all $x_0>0.$ This is possible only when $r=0,1,\ldots,n-2.$ Hence $f_r$ is not identically zero. Also the number of zeros of $f_r$ is at most $n-1$ by Proposition \ref{prop5}. Hence $S_r$ is nonsingular.

Now by using a homotopy argument, continuity of the determinant function, and the intermediate value theorem, we obtain that the $n\times n$ matrix $S_r$ is nonsingular and its determinant has the same sign as that of the determinant of $P_r,$ i.e., $\varepsilon_{n,r},$ given by Proposition \ref{prop3}. Since each $k\times k$ submatrix of $S_r$ is again of this form, each $k\times k$ subdeterminant of $S_r$ is nonsingular and has sign $\varepsilon_{k,r}.$\qed
\vskip.2in

\begin{thm}\label{thmr1}
Let $p_1<p_2<\cdots<p_n,$ $q_1<q_2<\cdots<q_n$ be positive real numbers and let $r$ be a nonnegative real number. Consider the $n\times n$ matrix
\begin{equation*}
H_r=\begin{bmatrix}(1+q_ip_j)^r\end{bmatrix}.
\end{equation*}
Then for each $r\ne 0,1,\ldots,n-2,$ $H_r$ is SSR with signature $(\varepsilon^\prime_{k,r})_{k=1}^{n},$ where $\varepsilon^\prime_{k,r}=(-1)^{\lfloor k/2\rfloor}\varepsilon_{k,r}.$
\end{thm}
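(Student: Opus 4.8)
The plan is to reduce $H_r$ to the matrix $S_r$ of Theorem~\ref{thm4} by means of the substitution $p_j\mapsto 1/p_j$, at the cost of a strictly positive diagonal factor and a reversal of the column order. First I would record the elementary identity $1+q_ip_j = p_j\bigl(q_i+\tfrac1{p_j}\bigr)$, in which every factor is positive, whence $(1+q_ip_j)^r = p_j^{\,r}\bigl(q_i+\tfrac1{p_j}\bigr)^r$. Consequently, for any strictly increasing row index set $I=\{i_1<\cdots<i_k\}$ and column index set $L=\{\ell_1<\cdots<\ell_k\}$, pulling the scalar $p_{\ell_b}^{\,r}$ out of the $b$th column shows that the corresponding $k\times k$ subdeterminant of $H_r$ equals
\[
\Bigl(\textstyle\prod_{b=1}^k p_{\ell_b}^{\,r}\Bigr)\,\det\,\bigl[(q_{i_a}+\tfrac1{p_{\ell_b}})^r\bigr]_{a,b=1}^k .
\]

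Next I would deal with orientation. Since $p_1<\cdots<p_n$, the column parameters satisfy $\tfrac1{p_{\ell_1}}>\cdots>\tfrac1{p_{\ell_k}}$, i.e.\ they \emph{decrease} with $b$. Reversing the $k$ columns multiplies the determinant by the sign of the reversal permutation on $k$ symbols, namely $(-1)^{\binom k2}=(-1)^{\lfloor k/2\rfloor}$, and produces a matrix whose column parameters $\tfrac1{p_{\ell_k}}<\cdots<\tfrac1{p_{\ell_1}}$ now increase. Putting $\hat p_m := 1/p_{\,n+1-m}$ for $m=1,\dots,n$, we get $0<\hat p_1<\cdots<\hat p_n$, and the reversed matrix is exactly the submatrix of $\widehat S_r:=\bigl[(q_i+\hat p_j)^r\bigr]$ lying in rows $I$ and in the (strictly increasing) columns $\{n+1-\ell_k<\cdots<n+1-\ell_1\}$. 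Hence the $k\times k$ subdeterminant of $H_r$ with rows $I$ and columns $L$ equals $(-1)^{\lfloor k/2\rfloor}\bigl(\prod_{b}p_{\ell_b}^{\,r}\bigr)$ times a $k\times k$ subdeterminant of $\widehat S_r$.

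Finally I would appeal to Theorem~\ref{thm4}: because $\hat p_1,\dots,\hat p_n$ and $q_1,\dots,q_n$ are strictly increasing sequences of positive reals and $r\neq 0,1,\dots,n-2$, the matrix $\widehat S_r$ is SSR with signature $(\varepsilon_{k,r})_{k\le n}$, so each of its $k\times k$ subdeterminants is nonzero with sign $\varepsilon_{k,r}$. As $\prod_b p_{\ell_b}^{\,r}>0$, the $k\times k$ subdeterminant of $H_r$ indexed by $I$ and $L$ is therefore nonzero with sign $(-1)^{\lfloor k/2\rfloor}\varepsilon_{k,r}=\varepsilon^\prime_{k,r}$, and this sign does not depend on $I$ or $L$. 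Letting $k$ run over $1,\dots,n$ and $I,L$ over all index sets gives that $H_r$ is SSR with signature $(\varepsilon^\prime_{k,r})_{k=1}^n$. The only point requiring care is the bookkeeping of the column reversal — checking that the reversal permutation on $k$ symbols has sign $(-1)^{\lfloor k/2\rfloor}$ and that the reversed column block is a valid increasing index set for $\widehat S_r$ — but this is routine; the substance of the argument is the single invocation of Theorem~\ref{thm4}.
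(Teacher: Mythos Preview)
Your proof is correct and follows essentially the same approach as the paper: factor $(1+q_ip_j)^r$ to reduce $H_r$ to an $S_r$-type matrix up to a positive diagonal and a reversal, then invoke Theorem~\ref{thm4}. The only cosmetic difference is that the paper factors out $q_i^r$ from the rows and reverses rows, whereas you factor out $p_j^{\,r}$ from the columns and reverse columns; the two are entirely symmetric and lead to the same sign computation $(-1)^{\lfloor k/2\rfloor}\varepsilon_{k,r}$.
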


\begin{proof}
We can write $H_r$ as
\begin{eqnarray*}
H_r & = & \begin{bmatrix}(1+q_ip_j)^r\end{bmatrix}\\
 & = & \begin{bmatrix}q_i^r\bigl(\frac{1}{q_i}+p_j\bigr)^r\end{bmatrix}\\
& = & D^r\begin{bmatrix}\bigl(\frac{1}{q_i}+p_j\bigr)^r\end{bmatrix}.
\end{eqnarray*}
Here $D$ is the positive diagonal matrix with entries $q_1,\ldots,q_n$ on its diagonal. We have $1/q_1>1/q_2>\cdots>1/q_n.$ Put $s_j=1/q_{n-j+1},$ $j=1,\ldots,n.$ Then $s_1<s_2<\cdots<s_n.$ Also
\begin{eqnarray*}
H_r & = & D^r\begin{bmatrix}(s_{n-i+1}+p_j)^r\end{bmatrix}\\
 & = & D^rV\begin{bmatrix}(s_i+p_j)^r\end{bmatrix},
\end{eqnarray*}
where $V$ is the antidiagonal matrix with its antidiagonal consisting of all ones. The determinant of $V$ is $(-1)^{\lfloor n/2\rfloor}.$ Hence, by Theorem \ref{thm4}, if $r\ne 0,1,\ldots,n-2,$  then $H_r$ is nonsingular and its determinant has sign $\varepsilon^\prime_{n,r}=(-1)^{\lfloor n/2\rfloor}\varepsilon_{n,r}.$ Now the theorem follows from the fact that every $k\times k$ submatrix of $H_r$ is again of the form $H_r.$
\end{proof}
\vskip.2in
\noindent {\it Proof of Theorem \ref{thm1}}. Without loss of generality we can assume that $x_1<\cdots <x_n.$ 
Let $r$ be a positive real number. The matrix $X^{\circ r}$ is a special case of $H_r$ when $p_i=q_i=x_i.$ 
Hence $X^{\circ r}$ is SSR with signature $(\varepsilon^\prime_{k,r})_{k=1}^{n}$ when $r\ne 0,1,\ldots,n-2.$ 
If $r>n-2,$ then $X^{\circ r}$ is positive definite.
Now let $r<n-2.$

Suppose that $2k<r<2k+1\le n-2$ for some nonnegative integer $k.$ Then the sign of every $(2k+3)\times (2k+3)$ subdeterminant of $X^{\circ r}$ is 
\begin{align}
\varepsilon^\prime_{2k+3,r}=(-1)^{k+1}\varepsilon_{2k+3,r}=(-1)^{k+1}(-1)^{k+4}=-1.\label{eqrr1}
\end{align}
Thus $X^{\circ r}$ is not positive semidefinite in this case. A similar calculation shows that $X^{\circ r}$ is not positive semidefinite for $2k+1<r<2k+2\le n-2.$ \qed
\vskip.2in

\begin{rem}
If the numbers $x_1,\ldots,x_n$ are arranged in either increasing or decreasing order, then the matrix $X^{\circ r}$ is STP for $r>n-2$ and is not positive definite when $0\le r\le n-2.$
\end{rem}

The {\it inertia} of a Hermitian matrix $A$ is the ordered triple $\In\, A=(\rho,\zeta,\eta),$ where $\rho, \zeta,\eta,$
respectively, denote the number of positive, zero and negative eigenvalues of $A.$

\begin{thm}
Let $x_1,\ldots,x_n$ be positive distinct numbers, and let $X$ be the $n\times n$ matrix defined in \eqref{eq1}.
\begin{itemize}
\item[$(i)$] $X^{\circ r}$ is positive definite for every $r>n-2,$ and $\In\, X^{\circ r}=(n,0,0).$
\item[$(ii)$] If $r=0,1,\ldots,n-2,$ then $X^{\circ r}$ is positive semidefinite with rank $r+1,$ and $\In\, X^{\circ r}=(r+1,n-(r+1),0).$
\item[$(iii)$] If $0\le m<r<m+1\le n-2,$ then $\In\, X^{\circ r}=(n-\lceil(n-m-2)/2\rceil, 0, \lceil\frac{n-m-2}{2}\rceil).$
\item[$(iv)$] All the eigenvalues of $X^{\circ r}$ are simple.
\end{itemize}
\end{thm}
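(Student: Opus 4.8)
The plan is to handle the four parts separately; the real work is in $(iii)$ and $(iv)$. Part $(i)$ is immediate: by Theorem \ref{thm1} (equivalently, by its proof above) $X^{\circ r}$ is positive definite for $r>n-2$, so $\In\, X^{\circ r}=(n,0,0)$. For $(ii)$, let $r\in\{0,1,\ldots,n-2\}$ and set $u_k=(x_1^k,\ldots,x_n^k)^T$ for $k=0,1,\ldots,r$; the binomial theorem gives
\begin{equation*}
X^{\circ r}=\begin{bmatrix}(1+x_ix_j)^r\end{bmatrix}=\sum_{k=0}^{r}\binom{r}{k}\,u_ku_k^T ,
\end{equation*}
a sum of positive semidefinite rank-one matrices, so $X^{\circ r}$ is positive semidefinite and its range is the linear span of $u_0,\ldots,u_r$. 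Since the $x_i$ are distinct and $r+1\le n$, the Vandermonde vectors $u_0,\ldots,u_r$ are linearly independent; hence $X^{\circ r}$ has rank $r+1$ and $\In\, X^{\circ r}=(r+1,\,n-(r+1),\,0)$.

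For $(iii)$, after a permutation similarity (which alters neither the inertia nor positive semidefiniteness) assume $x_1<\cdots<x_n$. Since $0\le m<r<m+1\le n-2$, the number $r$ is not an integer in $\{0,1,\ldots,n-2\}$, so Theorem \ref{thmr1}, applied with $p_i=q_i=x_i$, shows $X^{\circ r}$ is SSR with signature $(\varepsilon^\prime_{k,r})_{k=1}^{n}$. In particular its $k$th leading principal minor $D_k$ is nonzero with sign $\varepsilon^\prime_{k,r}$, so $\zeta=0$, and the classical Jacobi sign rule for the inertia of a symmetric matrix with nonvanishing leading principal minors gives that the number $\eta$ of negative eigenvalues of $X^{\circ r}$ equals the number of sign changes in the sequence $(1,D_1,\ldots,D_n)=(1,\varepsilon^\prime_{1,r},\ldots,\varepsilon^\prime_{n,r})$. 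To evaluate this, use Proposition \ref{prop3} with $n$ replaced by $k$, together with $\varepsilon^\prime_{k,r}=(-1)^{\lfloor k/2\rfloor}\varepsilon_{k,r}$: one finds $\varepsilon^\prime_{k,r}=+1$ for $1\le k\le m+2$ (where $r>k-2$), while for $k\ge m+3$ the sign $\varepsilon^\prime_{k,r}$ is determined by the parity of $k$ alone, in a way that itself depends on the parity of $m$. A short computation then shows there is exactly one sign change on passing from $k=m+2$ to $k=m+3$, and thereafter one further change for every second value of $k$ in $m+3\le k\le n-1$; running through the four cases according to the parities of $m$ and of $n$ yields $\eta=\lceil(n-m-2)/2\rceil$ in each. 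Hence $\In\, X^{\circ r}=\bigl(n-\lceil(n-m-2)/2\rceil,\,0,\,\lceil(n-m-2)/2\rceil\bigr)$.

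For $(iv)$ (read, in case $(ii)$ with $r\le n-3$, as a statement about the nonzero eigenvalues) I would again order the $x_i$ so that $x_1<\cdots<x_n$ and show the nonzero eigenvalues of $X^{\circ r}$ are always simple; combined with $(ii)$ this makes all $n$ eigenvalues simple in cases $(i)$ and $(iii)$ and when $r=n-2$. If $r>n-2$, then $X^{\circ r}$ is STP (Remark above), hence has $n$ distinct positive eigenvalues by the Gantmacher--Krein theorem (see \cite{fj,k}). If $0\le m<r<m+1\le n-2$, then $X^{\circ r}$ is SSR, so for each $1\le k\le n-1$ the $k$th compound matrix $C_k(X^{\circ r})$ equals $\varepsilon^\prime_{k,r}$ times an entrywise positive matrix; by Perron--Frobenius its spectral radius is a simple eigenvalue strictly dominating all others in modulus, and since the eigenvalues of $C_k(X^{\circ r})$ are the products $\prod_{i\in S}\lambda_i$ over $k$-element subsets $S$ (with $\lambda_1,\ldots,\lambda_n$ the eigenvalues of $X^{\circ r}$), this forces the $|\lambda_i|$ to be pairwise distinct, whence the $\lambda_i$ are simple. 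Finally, if $r\in\{0,1,\ldots,n-2\}$, write $X^{\circ r}=UBU^T$ where $U$ is the $n\times(r+1)$ matrix with columns $u_0,\ldots,u_r$ and $B=\diag\bigl(\binom r0,\ldots,\binom rr\bigr)$: the minors of $U^T$ are generalized Vandermonde determinants, hence positive, so by the Cauchy--Binet formula $U^TU$ is STP and therefore so is $B^{1/2}(U^TU)B^{1/2}$; the latter is square, so by Gantmacher--Krein its $r+1$ eigenvalues are distinct and positive, and these are exactly the nonzero eigenvalues of $X^{\circ r}=UBU^T$.

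The principal obstacle is the sign bookkeeping in $(iii)$: reading $\varepsilon^\prime_{k,r}$ off Proposition \ref{prop3} on each range of $k$ and then counting sign changes carefully enough, across the four parity cases, to land on the clean value $\lceil(n-m-2)/2\rceil$. The second point needing care is the fact used in $(iv)$ for non-integer $r<n-2$, that a symmetric SSR matrix has eigenvalues of pairwise distinct modulus; this requires spelling out the compound-matrix argument above, in particular observing that if two moduli coincided, then for a suitable $k$ the Perron root of $\varepsilon^\prime_{k,r}C_k(X^{\circ r})$ would either fail to be simple or fail to be strictly dominant.
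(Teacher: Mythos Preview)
Your proof is correct. Parts $(i)$ and $(ii)$ match the paper exactly (the paper also factors $X^{\circ r}=W^*D_0W$ with $W$ Vandermonde). For $(iv)$ in the non-integer range $m<r<m+1\le n-2$ you use the same compound-matrix/Perron argument as the paper, and for $r>n-2$ Gantmacher--Krein is just the STP specialization of that same argument. You also supply an extra argument, not in the paper, for the simplicity of the \emph{nonzero} eigenvalues when $r\in\{0,\ldots,n-2\}$, and you are right to flag that $(iv)$ as literally stated fails for $r\le n-3$ because $0$ has multiplicity $n-r-1\ge 2$; the paper's own proof of $(iv)$ in fact only treats the non-integer case.

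The one genuine methodological difference is in $(iii)$. You invoke Jacobi's sign rule on the sequence of leading principal minors, whose signs are handed to you by the SSR signature. The paper instead uses the compound matrices $\Lambda^kX^{\circ r}$: since all entries of $\Lambda^kX^{\circ r}$ have sign $\varepsilon'_{k,r}$, Perron gives that the dominant eigenvalue $\lambda_1\cdots\lambda_k$ is simple with sign $\varepsilon'_{k,r}$, and reading off the signature (the paper records it as $\varepsilon'_{m+2+l,r}=(-1)^{\lceil l/2\rceil}$ for $l=1,\ldots,n-m-2$, which is cleaner than tracking the parities of $m$ and $k$ separately) yields the sign of each $\lambda_k$ individually, whence the inertia. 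Your route is more elementary for $(iii)$ alone, avoiding Perron and exterior powers, but since that machinery is needed anyway for $(iv)$, the paper economizes by doing $(iii)$ and $(iv)$ in a single pass. One small wording quibble: for $k\ge m+3$ the sign $\varepsilon'_{k,r}=(-1)^{\lceil(k-m-2)/2\rceil}$ has period $4$ in $k$, not $2$, so it is not literally ``determined by the parity of $k$ alone''; but your count of sign changes, one at $k=m+3$ and one at every second subsequent step, is correct and gives $\lceil(n-m-2)/2\rceil$ as claimed.
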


\begin{proof}
$(i)$ follows from Theorems \ref{thm1} and \ref{thmr1}.
Let $r=0,1,\ldots,n-2.$ Then
\begin{eqnarray*}
X^{\circ r} & = & \begin{bmatrix}(1+x_ix_j)^r\end{bmatrix}\\
 & = & \begin{bmatrix}1+\binom{r}{1}x_ix_j+\cdots +\binom{r}{r-1}x_i^{r-1}x_j^{r-1}+x_i^rx_j^r\end{bmatrix}\\
 & = & W^*D_0W,
\end{eqnarray*}
where $W$ is the $(r+1)\times n$ Vandermonde matrix
\begin{equation*}
\begin{bmatrix}
1 & 1 & \cdots & 1\\
x_1 & x_2 & \cdots & x_n\\
\vdots & \vdots & \vdots\vdots\vdots & \vdots\\
x_1^r & x_2^r & \cdots & x_n^r\end{bmatrix}
\end{equation*}
and $D_0$ is the $(r+1)\times (r+1)$ diagonal matrix with its $j$th diagonal entry $\binom{r}{j-1}.$
Since $x_i\ne x_j$ for $i\ne j,$ $W$ has rank $r+1.$ The matrix $D_0$ is invertible. Hence $X^{\circ r}$ has rank $r+1.$
Therefore $\In\, X^{\circ r}=(r+1,n-r-1,0).$

Let $m<r<m+1\le n-2.$
Without loss of generality, we can assume that $x_1<\cdots<x_n.$
By Theorem \ref{thmr1} $X^{\circ r}$ is SSR with signature $(\varepsilon^\prime_{k,r})_{k=1}^{n}.$
If $k=1,\ldots,m+2,$ then $\varepsilon^\prime_{k,r}=1.$ 
By calculations similar to \eqref{eqrr1} we obtain
\begin{equation*} 
   \varepsilon^\prime_{m+2+l,r}=(-1)^{\lceil l/2\rceil},\ \ \ l=1,2,\ldots,n-m-2.
	\end{equation*}
	To prove $(iii)$ and $(iv)$, we use arguments similar to those used in \cite{bfj}.
	Since each entry of the $k$th exterior power $\Lambda^kX^{\circ r}$ is a $k\times k$ subdeterminant of $X^{\circ r}$ and $X^{\circ r}$ is SSR,
	all the entries of $\Lambda^kX^{\circ r}$ have the same sign $\varepsilon^\prime_{k,r}.$ 
	Let $\lambda_1,\ldots,\lambda_n$ be the eigenvalues of $X^{\circ r}$ arranged so that 
	$|\lambda_1|\ge |\lambda_2|\ge\cdots\ge |\lambda_n|.$
	Then $\lambda_1\lambda_2\cdots\lambda_k$ is the eigenvalue of $\Lambda^kX^{\circ r}$ that has the maximum modulus. 
	Perron's theorem tells us that if $A$ is an entrywise positive square matrix, then the eigenvalue of $A$ that has 
	the maximum modulus is positive and simple.
	Since the entries of $\Lambda^kX^{\circ r}$ have sign $\varepsilon^\prime_{k,r},$
	the eigenvalue $\lambda_1\lambda_2\cdots\lambda_k$ is simple and has sign $\varepsilon^\prime_{k,r}.$ 
	Thus all the eigenvalues $\lambda_1,\lambda_2,\ldots,\lambda_n$ are simple.
	For $k=1,\ldots,m+2$ $X^{\circ r}$ is entrywise positive.
	Thus $\lambda_1,\lambda_2,\ldots,\lambda_{m+2}$ are positive.
Since $\lambda_1\cdots\lambda_{m+3}<0,$ $\lambda_{m+3}<0.$
Now $\lambda_1\cdots\lambda_{m+3}\lambda_{m+4}<0$ implies $\lambda_{m+4}>0,$ and
$\lambda_1\cdots\lambda_{m+4}\lambda_{m+5}>0$ implies $\lambda_{m+5}<0.$
Continuing in this manner for each $j=1,\ldots,n-m-2$ we have
\begin{equation*}
\lambda_{m+2+j}<0\textrm{ if $j$ is odd, and }\lambda_{m+j}>0\textrm{ if $j$ is even.}
\end{equation*}
Thus the number of negative eigenvalues of $X^{\circ r}$ is $\lceil(n-m-2)/2\rceil,$ and the number of positive
eigenvalues is $n-\lceil (n-m-2)/2\rceil.$
\end{proof}

\begin{cor}\label{corr2}
Let $p_1,\ldots,p_n$ and $q_1,\ldots,q_n$ be positive real numbers such that $p_i/q_i\ne p_j/q_j$ if $i\ne j.$ For $r>0,$ the $n\times n$ matrix
 $\begin{bmatrix}(p_ip_j+q_iq_j)^r\end{bmatrix}$ is positive definite if and only if $r>n-2.$
\end{cor}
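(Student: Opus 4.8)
The plan is to reduce the matrix $B := \begin{bmatrix}(p_ip_j+q_iq_j)^r\end{bmatrix}$ to the matrix $X^{\circ r}$ from Theorem~\ref{thm1} by a diagonal congruence. First I would factor out $q_i q_j$ from each entry: writing $x_i := p_i/q_i$, we have $p_ip_j + q_iq_j = q_iq_j(1 + x_ix_j)$, so that $B = D\,X^{\circ r}\,D$ where $D = \diag(q_1^r,\ldots,q_n^r)$ is a positive diagonal matrix and $X = \begin{bmatrix}1+x_ix_j\end{bmatrix}$. Since $D$ is invertible, $B$ is positive definite if and only if $X^{\circ r}$ is positive definite, and more generally $B$ and $X^{\circ r}$ have the same inertia by Sylvester's law of inertia.

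The hypothesis $p_i/q_i \ne p_j/q_j$ for $i \ne j$ is exactly the condition that the numbers $x_1,\ldots,x_n$ are distinct positive reals, so Theorem~\ref{thm1} applies: $X^{\circ r}$ is positive semidefinite if and only if $r$ is a nonnegative integer or $r > n-2$. It remains only to upgrade ``positive semidefinite'' to ``positive definite'' and to rule out the integer values $r = 0,1,\ldots,n-2$. For $r > n-2$, Theorem~\ref{thmr1} (applied with $p_i = q_i = x_i$, after reordering the $x_i$ to be increasing) shows $X^{\circ r}$ is SSR, hence nonsingular, hence positive definite; equivalently one invokes the remark that $X^{\circ r}$ is STP for $r>n-2$. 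For $r = 0,1,\ldots,n-2$, the Vandermonde factorization $X^{\circ r} = W^* D_0 W$ from the preceding theorem shows $X^{\circ r}$ has rank $r+1 < n$ (since $W$ is $(r+1)\times n$ of full row rank and $D_0$ is invertible), so it is singular and therefore not positive definite. Combining these cases, $B$ is positive definite precisely when $r > n-2$.

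I expect no serious obstacle here: the entire argument is the diagonal-congruence reduction plus bookkeeping that translates the three cases ($r>n-2$; $r$ a nonnegative integer $\le n-2$; $r$ a non-integer $< n-2$) of Theorem~\ref{thm1} and the inertia computation into statements about $B$. The only point requiring a line of care is noting that the reordering of the $x_i$ needed to invoke Theorems~\ref{thm4}--\ref{thmr1} is harmless, since it corresponds to a simultaneous permutation of rows and columns of $X^{\circ r}$ (a congruence by a permutation matrix), which preserves definiteness.
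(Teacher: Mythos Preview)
Your proposal is correct and follows essentially the same approach as the paper: both set $s_i=x_i=p_i/q_i$ and use the diagonal congruence $\begin{bmatrix}(p_ip_j+q_iq_j)^r\end{bmatrix}=D\,X^{\circ r}\,D$ with $D=\diag(q_1^r,\ldots,q_n^r)$ to reduce to Theorem~\ref{thm1}. Your write-up is in fact more explicit than the paper's one-line proof, since you spell out why the integer values $r=0,1,\ldots,n-2$ give only singular (hence not positive definite) matrices and why $r>n-2$ gives genuine positive definiteness via Theorem~\ref{thmr1} or the STP remark; the paper leaves these upgrades implicit.
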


\begin{proof}
Let $s_j=p_j/q_j,$ $j=1,\ldots,n.$ The proof follows from the congruence $\begin{bmatrix}(p_ip_j+q_iq_j)^r\end{bmatrix}=D^r\begin{bmatrix}1+(s_is_j)^r\end{bmatrix}D^r,$ where $D$ is the diagonal matrix with diagonal entries $q_1,\ldots,q_n.$
\end{proof}

\begin{cor}
Let $0<x_1<x_2\cdots<x_n<\pi/2,$ and let $r$ be a positive real number. Then the $n\times n$ matrix $\begin{bmatrix}\cos ^r(x_i-x_j)\end{bmatrix}$ is positive
definite if and only if $r>n-2.$
\end{cor}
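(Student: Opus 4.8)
The plan is to reduce this corollary to Corollary~\ref{corr2} via the angle-subtraction identity for cosine. Setting $p_i=\cos x_i$ and $q_i=\sin x_i$, we have $\cos(x_i-x_j)=\cos x_i\cos x_j+\sin x_i\sin x_j=p_ip_j+q_iq_j$, so the matrix under consideration is precisely $\begin{bmatrix}(p_ip_j+q_iq_j)^r\end{bmatrix}$. Thus everything comes down to checking that the $p_i$ and $q_i$ satisfy the hypotheses of Corollary~\ref{corr2}.

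Next I would verify those hypotheses. Since $0<x_i<\pi/2$ for every $i$, both $p_i=\cos x_i>0$ and $q_i=\sin x_i>0$, so $p_1,\ldots,p_n$ and $q_1,\ldots,q_n$ are positive real numbers. For the ratio condition, observe that $p_i/q_i=\cot x_i$, and the cotangent is strictly decreasing on $(0,\pi/2)$; since $x_1<x_2<\cdots<x_n$ all lie in this interval, the numbers $\cot x_1,\ldots,\cot x_n$ are pairwise distinct, i.e.\ $p_i/q_i\ne p_j/q_j$ whenever $i\ne j$. Corollary~\ref{corr2} then applies verbatim and yields that $\begin{bmatrix}(p_ip_j+q_iq_j)^r\end{bmatrix}=\begin{bmatrix}\cos^r(x_i-x_j)\end{bmatrix}$ is positive definite if and only if $r>n-2$, which is the claim.

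There is no genuine obstacle in this final step: all the substance is already contained in Corollary~\ref{corr2}, and behind it in Theorem~\ref{thm1} and the strict sign regularity analysis of Theorem~\ref{thm4}. The only point meriting a word of care is the distinctness/ratio hypothesis, which is exactly what the strict monotonicity of $\cot$ on $(0,\pi/2)$ delivers; note that it would fail if the $x_i$ were allowed to range outside a single half-period, so the assumption that all $x_i$ lie in $(0,\pi/2)$ is what makes the reduction go through.
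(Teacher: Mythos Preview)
Your proof is correct and follows exactly the paper's approach: set $p_i=\cos x_i$, $q_i=\sin x_i$, use the identity $\cos(x_i-x_j)=p_ip_j+q_iq_j$, and invoke Corollary~\ref{corr2}. You have actually supplied more detail than the paper does, in particular the explicit verification of the ratio condition via the strict monotonicity of $\cot$ on $(0,\pi/2)$.
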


\begin{proof}
Take $p_i=\cos\, x_i$ and $q_i=\sin\, x_i,$ $1\le i\le n.$ Since $0<x_1<\cdots <x_n<\pi/2,$ $p_i$ and $q_i$ satisfy the conditions of Corollary \ref{corr2}.
The proof follows from the identity $\cos(x_i-x_j)=\cos\, x_i\, \cos\, x_j+\sin\, x_i\, \sin\, x_j,$ and Corollary \ref{corr2}.
\end{proof}

\section{Proof of Theorem \ref{thm2}}
\vskip.2in
\begin{lem}\label{lem31}
Let $n$ and $k$ be integers such that $n$ is odd and $0\le 2k<n.$
Then exactly $n-2k-1$ eigenvalues of the $n\times n$ matrix $|C|_\circ^{\circ 2k}$ are zero.
\end{lem}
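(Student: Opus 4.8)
First I would observe that $|C|_\circ^{\circ 2k}$ is a circulant matrix and compute its spectrum via the discrete Fourier transform. Since $2k$ is even, $|C|_\circ^{\circ 2k}=\bigl[\cos^{2k}\!\bigl(\tfrac{(i-j)\pi}{n}\bigr)\bigr]$, and the scalar function $t\mapsto\cos^{2k}(t\pi/n)$ has period $n$, because replacing $t$ by $t+n$ multiplies $\cos(t\pi/n)$ by $-1$ while the exponent is even. Hence the $(i,j)$ entry of $|C|_\circ^{\circ 2k}$ depends only on $(i-j)\bmod n$, so the matrix is a real symmetric circulant; its eigenvectors are the Fourier vectors $v_\ell=(\omega^{\ell},\omega^{2\ell},\dots,\omega^{n\ell})^{\top}$, $\ell=0,1,\dots,n-1$, where $\omega=e^{2\pi\sqrt{-1}/n}$, and the corresponding eigenvalue is $\lambda_\ell=\sum_{t=0}^{n-1}\cos^{2k}(t\pi/n)\,\omega^{t\ell}$.

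Next I would substitute the elementary power-reduction identity
\begin{equation*}
\cos^{2k}\theta=\frac{1}{2^{2k}}\binom{2k}{k}+\frac{1}{2^{2k-1}}\sum_{m=1}^{k}\binom{2k}{k-m}\cos(2m\theta)
\end{equation*}
into this expression, write $\cos(2mt\pi/n)=\tfrac12(\omega^{mt}+\omega^{-mt})$, and evaluate the geometric sums $\sum_{t=0}^{n-1}\omega^{at}$, which equal $n$ when $n\mid a$ and $0$ otherwise. This gives
\begin{equation*}
\lambda_\ell=\frac{n}{2^{2k}}\Bigl(\binom{2k}{k}\,\delta_{\ell,0}+\sum_{m=1}^{k}\binom{2k}{k-m}\bigl(\delta_{\ell,m}+\delta_{\ell,-m}\bigr)\Bigr),
\end{equation*}
where $\delta_{\ell,m}=1$ if $\ell\equiv m\pmod n$ and $0$ otherwise. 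Since $0\le 2k<n$, the residues $0,\pm1,\dots,\pm k$ are pairwise distinct modulo $n$; hence $\lambda_\ell$ is a strictly positive multiple of a binomial coefficient for exactly the $2k+1$ indices $\ell\in\{0,1,\dots,k\}\cup\{n-k,\dots,n-1\}$, and $\lambda_\ell=0$ for the remaining $n-2k-1$ indices. This proves the lemma, and incidentally shows that all nonzero eigenvalues are positive, so $|C|_\circ^{\circ 2k}$ is positive semidefinite.

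I do not expect a genuine obstacle: once the circulant structure is recognized, everything reduces to the power-reduction identity and the orthogonality of characters of $\mathbb Z/n\mathbb Z$. The two points deserving a word of care are that the entrywise $2k$th power really is $n$-periodic in the index difference (here it is the evenness of the exponent, not of $n$, that matters; only $2k<n$ is used for the count), and the trivial verification that $0,\pm1,\dots,\pm k$ index $2k+1$ distinct Fourier modes. If one prefers an eigenvector-free argument, the same identity displays $|C|_\circ^{\circ 2k}$ as a positive linear combination of $\mathbf 1\mathbf 1^{\top}$ and the matrices $\mathbf c_m\mathbf c_m^{\top}+\mathbf s_m\mathbf s_m^{\top}$ $(1\le m\le k)$, with $\mathbf c_m=\bigl(\cos\tfrac{2mj\pi}{n}\bigr)_{j=1}^{n}$ and $\mathbf s_m=\bigl(\sin\tfrac{2mj\pi}{n}\bigr)_{j=1}^{n}$; since $\mathbf 1,\mathbf c_1,\mathbf s_1,\dots,\mathbf c_k,\mathbf s_k$ are $2k+1$ mutually orthogonal nonzero vectors (again using $2k<n$), the rank is visibly $2k+1$ and the nullity $n-2k-1$.
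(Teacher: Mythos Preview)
Your proof is correct and takes a genuinely different route from the paper's.

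The paper does not exploit the circulant structure here. Instead it uses the angle-addition formula $\cos\bigl(\tfrac{(i-j)\pi}{n}\bigr)=\cos\tfrac{i\pi}{n}\cos\tfrac{j\pi}{n}+\sin\tfrac{i\pi}{n}\sin\tfrac{j\pi}{n}$, factors out $\cos^{2k}(j\pi/n)$ (this is where the hypothesis that $n$ is odd enters, to ensure these are nonzero), and substitutes $z_j=\tan(j\pi/n)$ to obtain the congruence $|C|_\circ^{\circ 2k}=D\bigl[(1+z_iz_j)^{2k}\bigr]D$. A binomial expansion then gives a Vandermonde-type factorization $DW^*D_0WD$ with $W$ a $(2k+1)\times n$ Vandermonde matrix in the distinct $z_j$, whence the rank is $2k+1$. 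This ties the lemma back to the paper's central object $[(1+x_ix_j)^r]$. Your approach, by contrast, computes every eigenvalue explicitly via the DFT and the power-reduction identity; it is more self-contained, yields positive semidefiniteness and the exact spectrum as a bonus, and---as you note---does not actually require $n$ to be odd, only $2k<n$. The paper's argument genuinely needs $n$ odd for the tangent substitution to make sense.
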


\begin{proof}
We can write the $n\times n$ matrix $|C|_\circ^{\circ 2k}$ as
\begin{eqnarray*}
|C|_\circ^{\circ 2k} & = & \begin{bmatrix}\bigl(\cos\frac{(i-j)\pi}{n}\bigr)^{2k}\end{bmatrix}\\
 & = & \begin{bmatrix}\bigl(\cos\frac{i\pi}{n}\cos\frac{j\pi}{n}+\sin\frac{i\pi}{n}\sin\frac{j\pi}{n}\bigr)^{2k}\end{bmatrix}.
\end{eqnarray*}
Let $D$ be the $n\times n$ diagonal matrix with its $j$th diagonal entry $\cos^{2k}(j\pi/n),$ and let $z_j=\tan(j\pi/n).$ Since $n$ is odd, $\cos(j\pi/n)\ne 0$ for every $j=1,\ldots,n.$ Thus $D$ is invertible and each $z_j$ is finite.
We have
\begin{eqnarray*}
|C|_\circ^{\circ 2k} & = & \begin{bmatrix}\cos^{2k}\frac{i\pi}{n}\bigl(1+z_iz_j)^{2k}\cos^{2k}\frac{j\pi}{n}\end{bmatrix}\\
 & = & D\begin{bmatrix}(1+z_iz_j)^{2k}\end{bmatrix}D\\
 & = & D\begin{bmatrix}1+\binom{2k}{1}z_iz_j+\cdots+\binom{2k}{2k-1}z_i^{2k-1}z_j^{2k-1}+z_i^{2k}z_j^{2k}\end{bmatrix}D\\
 & = & DW^*D_0WD,
\end{eqnarray*}
where $W$ is the $(2k+1)\times n$ Vandermonde matrix
\begin{equation*}
\begin{bmatrix}1 & 1 & \cdots & 1\\
z_1 & z_2 & \cdots & z_n\\
\vdots & \vdots & \vdots\vdots\vdots & \vdots\\
z_1^{2k} & z_2^{2k} & \cdots & z_n^{2k}\end{bmatrix},
\end{equation*}
and $D_0$ is the $(2k+1)\times (2k+1)$ diagonal matrix with its $j$th diagonal entry $\binom{2k}{j-1},$ $j=1,\ldots,2k+1.$ Since $z_i\ne z_j$ for all $i\ne j,$ $W$ is a full rank matrix. By the hypothesis $2k< n,$ the rank of $W$ is $2k+1.$
The matrices $D$ and $D_0$ are invertible. Hence the rank of $|C|_\circ^{\circ 2k}$ is $2k+1.$
Therefore the number of zero eigenvalues of $|C|_\circ^{\circ 2k}$ is $n-2k-1.$
\end{proof}

We shall use the following generalized version of the Descartes rule of signs, see pp. 46, Problem 77 in \cite{ps}.

\begin{prop}\label{prop32}
Let $a_1,\ldots,a_n,\mu_1,\ldots,\mu_n$ be real numbers such that $\mu_1>\mu_2>\cdots >\mu_n.$ Let $f$ be the function defined on
 $\mathbb{R}$ as
\begin{equation*}
f(x)=a_1\e^{\mu_1 x}+a_2\e^{\mu_2 x}+\cdots +a_n\e^{\mu_n x}.
\end{equation*}
Then the number of real zeros of $f$ is at most the number of sign changes in the tuple $(a_1,\ldots,a_n).$
\end{prop}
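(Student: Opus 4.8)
The plan is to prove, by induction on $V:=V(a_1,\ldots,a_n)$ — the number of sign changes in the coefficient tuple, with zero entries omitted — the slightly stronger assertion that $f$ has at most $V$ distinct real zeros. This is a Descartes-type argument in which differentiation of a polynomial is replaced by differentiation of $e^{-cx}f(x)$ for a well-chosen constant $c$; it runs parallel to the proof of Proposition~\ref{prop5} above, with the exponentials $e^{\mu_j x}$ now playing the role of the functions $(x+p_j)^r$, and it is in fact cleaner because $f$ is defined on all of $\mathbb{R}$, so no boundary phenomena intervene. First I would make two reductions: deleting every index with $a_j=0$ alters neither $f$ nor $V$, so we may assume all $a_j\ne 0$ (and if no term survives then $f\equiv 0$, with nothing to prove). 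The base case $V=0$ is then immediate: all the $a_j$ share one sign, and since $e^{\mu_j x}>0$ for every $x$, the sum $f$ keeps that sign everywhere and never vanishes.

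For the inductive step, assume $V\ge 1$ and the claim for coefficient tuples with fewer than $V$ sign changes. Choose $m$ with $1\le m\le n-1$ and $a_m a_{m+1}<0$ — such an $m$ exists since $V\ge 1$ and no $a_j$ is zero — and fix $c$ with $\mu_{m+1}<c<\mu_m$. Set $h(x)=e^{-cx}f(x)$, which has exactly the zeros of $f$; then
\[
h'(x)=e^{-cx}\sum_{j=1}^{n}a_j(\mu_j-c)\,e^{\mu_j x}=:e^{-cx}g(x),
\]
so $g$ is again an exponential sum with strictly decreasing exponents $\mu_1>\cdots>\mu_n$ and with coefficients $b_j:=a_j(\mu_j-c)$, all of which are nonzero. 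If $f$ has $k$ distinct real zeros, Rolle's theorem provides a zero of $h'$, hence of $g$, strictly between each pair of consecutive zeros, so $g$ has at least $k-1$ distinct real zeros.

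The core step is the sign-change count for $(b_1,\ldots,b_n)$. Since $\mu_j-c>0$ for $j\le m$ and $\mu_j-c<0$ for $j\ge m+1$, the sign of $b_j$ equals $\sgn a_j$ for $j\le m$ and $-\sgn a_j$ for $j\ge m+1$. Reversing all signs within a block does not change its number of sign changes, so the sign changes of $(b_j)$ internal to $\{1,\ldots,m\}$ and to $\{m+1,\ldots,n\}$ match those of $(a_j)$; at the junction, $\sgn b_m=\sgn a_m$ while $\sgn b_{m+1}=-\sgn a_{m+1}=\sgn a_m$ (using $a_m a_{m+1}<0$), so the one sign change that $(a_j)$ has between positions $m$ and $m+1$ is destroyed and none is created. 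Hence $V(b_1,\ldots,b_n)=V-1$, and the induction hypothesis applied to $g$ gives $k-1\le V-1$, i.e.\ $k\le V$. Since $k$ was an arbitrary finite number of distinct zeros of $f$, the function $f$ has at most $V$ real zeros, as claimed.

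The only delicate point, and the one I would take care to pin down, is this choice of $m$ and $c$ together with the accompanying bookkeeping: one must verify that a consecutive opposite-sign pair of nonzero coefficients exists, that the strict inequalities $\mu_{m+1}<c<\mu_m$ keep every $b_j\ne 0$ so that $g$ has exactly the same form and the induction hypothesis applies, and that precisely one sign change is lost while none is gained. The rest — Rolle's theorem and the $V=0$ base case — is routine. If one wants the version that counts zeros with multiplicity, the same argument goes through once Rolle's theorem is used in the stronger form that a zero of $h$ of multiplicity $\mu$ contributes multiplicity $\mu-1$ to $h'$, in addition to one simple zero of $h'$ between consecutive zeros of $h$.
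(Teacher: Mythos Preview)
Your argument is correct and is precisely the classical Descartes--Rolle induction for exponential sums. Note, however, that the paper does not actually prove Proposition~\ref{prop32}: it merely quotes the result with a reference to P\'olya--Szeg\H{o} (Problem~77, p.~46), so there is no ``paper's own proof'' to compare against. Your proof is essentially the standard one found in that reference, and---as you observe---it runs exactly parallel to the paper's proof of Proposition~\ref{prop5}, with $e^{\mu_j x}$ in place of $(x+p_j)^r$ and the auxiliary factor $e^{-cx}$ in place of $(x+u)^{-r}$; the bookkeeping on sign changes and the use of Rolle's theorem are identical in spirit. Nothing needs to be added.
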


We shall use the fact that the eigenvalues of an $n\times n$ circulant matrix
\begin{equation*}
A=\begin{bmatrix}c_0 & c_1 & \cdots & c_{n-1}\\
c_{n-1} & c_0 & \cdots & c_{n-2}\\
\vdots & \vdots & \vdots\vdots\vdots & \vdots\\
c_1 & c_2 & \cdots & c_0\end{bmatrix}.
\end{equation*}
are given by
\begin{equation*}
c_0+w^jc_1+w^{2j}c_2+\cdots +w^{(n-1)j}c_{n-1},\ \ \ 0\le j\le n-1,
\end{equation*}
where $w=\e^{\iota 2\pi/n}.$
\vskip.2in
\noindent {\it Proof of Theorem \ref{thm2}}. 
Let $n=2m+1$ be an odd integer and let $r\ge 0.$
Using the fact that $|\cos(\pi-x)|=|\cos\, x|,$ we see that $|C|_\circ^{\circ r}$ is a circulant matrix.
Hence the eigenvalues of $|C|_\circ^{\circ r}$ are given by 
\begin{eqnarray*}
\lambda_j(r) & = & 1+\sum\limits_{k=1}^{n-1}w^{kj}|\cos(k\pi/n)|^r\\
 & = & 1+\sum\limits_{k=1}^{m}(w^{kj}+w^{-kj})\, \cos^r(k\pi/n)\\
 & = & 1+2\sum\limits_{k=1}^{m}\cos(2kj\pi/n)\, \cos^r(k\pi/n)\\
 & = & 1\cdot\e^{\mu_0r}+2\sum\limits_{k=1}^{m}\cos(2kj\pi/n)\e^{\mu_kr},
\end{eqnarray*}
where $\mu_k=\log\, \cos(k\pi/n),$ $k=0,\ldots,m.$ Note that $\mu_0>\mu_1>\cdots >\mu_m.$
		Let $Z_j$ denote the number of zeros of the function $\lambda_j(r)$ as $r$ varies over $[0,\infty).$
		By Proposition \ref{prop32} $Z_j$ is not bigger than the number of sign changes in the $m+1$-tuple 
\begin{equation*}
\zeta_j=(1,2\cos(2j\pi/n),\ldots,2\cos(2mj\pi/n)).
\end{equation*}

Since $\zeta_0=(1,2,\ldots,2),$ $Z_0=0.$ 
For $j=1,\ldots,2m$ let $\theta_{j,k}=2kj\pi/(2m+1),$ $k=0,1\ldots,m.$ Then 
\begin{equation*}
\zeta_j=(\cos\, \theta_{j,0},\cos\, \theta_{j,1},\ldots,\cos\, \theta_{j,m}).
\end{equation*}

Let $1\le j\le m.$ Note that $\cos\, \theta$ changes sign whenever $\theta$ crosses an odd multiple of $\pi/2.$ So to count the number of sign changes of $\zeta_j$ we need to track the number of times the argument $\theta_{j,k}$ moves past one such value.
Since $(2j-1)\pi/2<\theta_{j,m}<j\pi$ and $0=\theta_{j,0}<\theta_{j,1}<\cdots<\theta_{j,m},$
$\theta_{j,k}$'s cross the values $\pi/2,3\pi/2,\ldots,(2j-1)\pi/2$ exactly once, i.e.,
for each $p=1,2,\ldots,j$ there exists a unique $k$ that satisfies $\theta_{j,(k-1)}<(2p-1)\pi/2<\theta_{j,k}.$
Hence the number of sign changes in $\zeta_j$ is $j,$ and by Proposition \ref{prop32} $Z_j\le j.$

Now let $m+1\le j\le 2m.$ Let $j^\prime=2m+1-j.$ Then $1\le j^\prime\le m$ and $\theta_{j^\prime ,k}=2k\pi-\theta_{j,k}.$
This gives $\cos\, \theta_{j^\prime, k}=\cos\, \theta_{j,k},$ and consequently $\zeta_{j^\prime}=\zeta_j.$
This implies 
\begin{equation}
\lambda_j(r)=\lambda_{j^\prime}(r) \textrm{ for all } r\ge 0.\label{eq37}
\end{equation}
 Hence $Z_j=Z_{j^\prime}\le 2m+1-j.$

The total number of zero eigenvalues of $|C|_\circ^{\circ r}$ as $r$ varies over $[0,\infty)$ is at most
\begin{equation}
\sum\limits_{j=0}^{2m}Z_j\le \sum\limits_{j=0}^{m}j+\sum\limits_{j=m+1}^{2m}(2m+1-j)=m(m+1).\label{eq33}
\end{equation}
By Lemma \ref{lem31} we know that the number of zero eigenvalues of $(2m+1)\times (2m+1)$ matrix $|C|_\circ^{\circ 2k}$ is $2(m-k),$ $k=0,1,\ldots,m-1.$
Hence the total number of zero eigenvalues of $|C|_\circ^{\circ 2k}$ as $k$ varies over $0,1,\ldots,m-1$ is
\begin{equation}
\sum\limits_{k=0}^{m-1}2(m-k)=m(m+1).\label{eq34}
\end{equation}
Hence for any $j=0,1,\ldots,2m$ $\lambda_j(r)=0$ if and only if $r$ is an even integer $2k,$ $0\le k\le m-1.$
Therefore, the $(2m+1)\times (2m+1)$ matrix $|C|_\circ^{\circ r}$ is nonsingular for all $r\ne 0,2,\ldots,2m-2.$

If $r$ is an even integer, then $|C|_\circ^{\circ r}$ is positive semidefinite.
Let $r>2m-2.$ Theorem 5.1 of \cite{h} tells us that $|C|_\circ^{\circ r}$ is positive semidefinite for all $r\ge 2m-1.$
If $|C|_\circ^{\circ r}$ is not positive semidefinite for some $r=r_0$ in the interval $(2m-2,2m-1),$ then at least one eigenvalue
$\lambda_j(r_0)$ should be negative. 
Since $\lambda_j$ is a continuous function of $r,$ there exists an $s>r_0$ such that $\lambda_j(s)=0.$
But this cannot be so because $|C|_\circ^{\circ r}$ is nonsingular for all $r>2m-2.$ Hence $|C|_\circ^{\circ r}$ is positive definite for all $r>2m-2.$

Let $r$ be not an even integer and let $0<r<2m-2.$
From \eqref{eq33} and \eqref{eq34}, we see that 
\begin{equation}
Z_j=j=Z_{2m+1-j} \textrm{ for }j=0,1,\ldots,m.\label{eq36}
\end{equation}
Let $2\le j\le m.$ By Lemma \ref{lem31} and \eqref{eq36}, we see that 
$\lambda_j(r)=0$ if and only if $r=0,2,\ldots,2(j-1).$
Since $|C|_\circ^{\circ r}$ is positive definite for $r>2m-2,$ each $\lambda_j(r)>0$ for $r>2m-2.$ 
The last sign change of $\lambda_j(r)$ occurs at $r=2(j-1).$
Thus 
\begin{equation}
\lambda_j(r)>0 \textrm{ for }r>2(j-1), \textrm{ and }\lambda_j(r)<0 \textrm{ for }2(j-2)<r<2(j-1).\label{eqrs1}
\end{equation}
This implies that $|C|_\circ^{\circ r}$ is not positive semidefinite
whenever $r\in (2(j-2),2(j-1)),$ $j=2,3,\ldots,m.$ 
 \qed
\vskip.2in
Finally we give the inertia of $|C|_\circ^{\circ r}$ in the following theorem.

\begin{thm}
Let $n=2m+1$ be an odd positive integer and let $C$ be the $n\times n$ matrix defined in \eqref{eq2}. 
\begin{itemize}
\item[$(i)$] $|C|_\circ^{\circ r}$ is positive definite for $r>n-3,$ and $\In\, |C|_\circ^{\circ r}=(n,0,0).$
\item[$(ii)$] If $r$ is a nonnegative even integer less than $n,$ then $\In\, |C|_\circ^{\circ r}=(r+1,n-(r+1),0).$
\item[$(ii)$] If $0\le 2k<r<2k+1<n-3,$ then 
\begin{equation*}
\In\, |C|_\circ^{\circ r}=\begin{cases}
(m+k+1,0,m-k) & \textrm{ if $m-k$ is even}\\
(m+k,0,m+k+1) & \textrm{ if $m-k$ is odd.}
\end{cases}
\end{equation*}
\item[$(iv)$] Exactly one eigenvalue of $|C|_\circ^{\circ r}$ is simple.
\end{itemize}
\end{thm}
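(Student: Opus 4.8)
The plan is to dispatch the two easy items — positive definiteness for $r>n-3$, and the even-integer case — from facts already in hand, and to treat the other two via the circulant eigenvalue functions $\lambda_0(r),\dots,\lambda_{2m}(r)$ of $|C|_\circ^{\circ r}$ introduced in the proof of Theorem~\ref{thm2}. For $r>n-3$ there is nothing to do: the proof of Theorem~\ref{thm2} shows $|C|_\circ^{\circ r}$ is positive definite for all $r>2m-2=n-3$, so its inertia is $(n,0,0)$. For $r=2k$ with $0\le k\le m$ (that is, $r$ a nonnegative even integer less than $n$), Lemma~\ref{lem31} and its proof give that $|C|_\circ^{\circ 2k}$ has rank $2k+1$ and hence nullity $n-2k-1$; since $r$ is an even integer, Schur's theorem makes $|C|_\circ^{\circ 2k}$ positive semidefinite, so all $2k+1$ nonzero eigenvalues are positive and $\In\,|C|_\circ^{\circ 2k}=(2k+1,\,n-2k-1,\,0)=(r+1,\,n-r-1,\,0)$.

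For the item on the range $0\le 2k<r<2k+1<n-3$, fix such an $r$; then $k\le m-2$ and $|C|_\circ^{\circ r}$ is nonsingular (it is singular only at $r=0,2,\dots,2m-2$), so it suffices to count positive versus negative eigenvalues. Use the relation $\lambda_j(r)=\lambda_{2m+1-j}(r)$ of \eqref{eq37}, the equality $Z_j=j$ of \eqref{eq36}, and \eqref{eqrs1}. One has $\lambda_0(r)>0$ for every $r$. For $1\le j\le m$ the zeros of $r\mapsto\lambda_j(r)$ on $[0,\infty)$ are exactly $0,2,\dots,2(j-1)$ (recorded in the proof of Theorem~\ref{thm2} for $2\le j\le m$, and for $j=1$ the single zero $r=0$ is the value attached to the all-ones matrix $|C|_\circ^{\circ 0}$); moreover, since by Proposition~\ref{prop32} the number of real zeros of $\lambda_j$ counted with multiplicity is at most the number $j$ of sign changes of $\zeta_j$, all of these zeros are simple, so $\lambda_j$ strictly alternates sign between consecutive ones. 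Hence $\lambda_j(r)>0$ for $1\le j\le k+1$ (here $r>2(j-1)$, past the last zero), while for $k+2\le j\le m$ the point $r$ lies in the open interval $(2k,2k+2)$, whose endpoints are consecutive zeros of $\lambda_j$ and on which — starting from the sign $+$ that \eqref{eqrs1} gives on $(2(j-1),\infty)$ and flipping across each intervening simple zero — $\lambda_j$ has sign $(-1)^{j-1-k}$. Thus among $j=k+2,\dots,m$ there are $\lfloor(m-k-1)/2\rfloor$ indices with $\lambda_j(r)>0$ and $\lceil(m-k-1)/2\rceil$ with $\lambda_j(r)<0$; doubling all the indices $1\le j\le m$ via \eqref{eq37} and adjoining the single eigenvalue $\lambda_0(r)>0$ yields $m+k+1$ positive and $m-k$ negative eigenvalues when $m-k$ is even, and $m+k+2$ positive and $m-k-1$ negative when $m-k$ is odd, with no zero eigenvalues. (Equivalently, the same count emerges by perturbing off $r=2k$: there $|C|_\circ^{\circ 2k}$ has $2k+1$ positive and $2(m-k)$ zero eigenvalues, the vanishing ones being $\lambda_{k+1},\dots,\lambda_m$ and their mirrors, and as $r$ increases past $2k$ the positive ones persist while $\lambda_{k+1},\dots,\lambda_m$ take signs $+,-,+,-,\dots$.)

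For the simplicity item, note that since $n=2m+1$ is odd we have $j\ne 2m+1-j$ for each $1\le j\le m$, so \eqref{eq37} forces every eigenvalue $\lambda_j(r)$ with $1\le j\le m$ to have multiplicity at least two; hence at most $\lambda_0(r)$ can be simple. On the other hand the formulas in the proof of Theorem~\ref{thm2} give $\lambda_0(r)-\lambda_j(r)=2\sum_{k=1}^{m}\bigl(1-\cos(2kj\pi/n)\bigr)\cos^r(k\pi/n)$, a sum of nonnegative terms (note $\cos(k\pi/n)>0$ for $1\le k\le m$ because $n$ is odd) whose $k=1$ term is strictly positive since $0<j<n$; thus $\lambda_0(r)$ strictly exceeds every other eigenvalue of $|C|_\circ^{\circ r}$ and is therefore simple. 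So exactly one eigenvalue of $|C|_\circ^{\circ r}$ is simple, for every $r\ge 0$.

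The only part needing care is the third item, and within it the one step that is not purely formal is the simplicity of the zeros of each $r\mapsto\lambda_j(r)$: this is what lets the sign of $\lambda_j$ be propagated across all the intervals between $0$ and $2(j-1)$, not just the two rightmost ones that appear explicitly in the proof of Theorem~\ref{thm2}, and it rests on applying Proposition~\ref{prop32} in its sharp form — zeros counted with multiplicity are bounded by the number of sign changes — to the $j$ distinct zeros already located in \eqref{eq36}. Everything else reduces to linear algebra, continuity, and counting.
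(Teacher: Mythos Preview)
Your argument is correct and follows essentially the same route as the paper: parts (i) and (ii) are dispatched identically, and part (iii) proceeds, as in the paper, by locating the zeros of each $\lambda_j(r)$, propagating signs between them, and doubling via the symmetry $\lambda_j=\lambda_{2m+1-j}$. You actually add a point the paper glosses over: you justify that each zero of $\lambda_j$ is simple by invoking Proposition~\ref{prop32} with multiplicities, which is what legitimizes the sign alternation across \emph{all} the intervals, not just the two rightmost ones recorded in \eqref{eqrs1}. For (iv) you take a slightly different, more elementary path: instead of appealing to Perron's theorem for the entrywise positive matrix $|C|_\circ^{\circ r}$ (as the paper does) to produce a simple eigenvalue, you exhibit the explicit inequality $\lambda_0(r)-\lambda_j(r)=2\sum_{k=1}^m(1-\cos(2kj\pi/n))\cos^r(k\pi/n)>0$, which directly shows $\lambda_0(r)$ is strictly maximal in the circulant list and hence simple. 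Note also that your inertia in the odd case, $(m+k+2,\,0,\,m-k-1)$, disagrees with the theorem's printed formula $(m+k,\,0,\,m+k+1)$; your count is the correct one (and agrees with the paper's own proof after the index shift $k\mapsto k+1$ used there), while the printed formula is evidently a typo, since its entries do not even sum to $n$.
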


\begin{proof}
$(i)$ follows from Theorem \ref{thm2}, and $(ii)$ follows from Theorem \ref{thm2} and Lemma \ref{lem31}.

Let $0\le 2(k-1)<r<2k\le n-3.$
By \eqref{eqrs1}, $\lambda_j(r)>0$ for $j=0,1,\ldots,k,$ and $\lambda_{k+1}(r)<0.$
A sign change of $\lambda_j(r)$ occurs at each of its zero.
Since the only zeros of $\lambda_j(r)$ are $0,2,\ldots,2(j-1),$
 $\lambda_{k+2}(r)>0.$
We argue in a similar manner for $\lambda_{k+3}(r),\ldots,\lambda_m(r).$ 
thus for each $j=1,\ldots,m-k$ $\lambda_{k+j}(r)<0$ if $j$ is odd, and $\lambda_{k+j}>0$ if $j$ is even.
By \eqref{eq37} we have $\lambda_{2m+1-j}(r)=\lambda_j(r)$ for each $j=1,\ldots,m.$
Hence the number of negative eigenvalues of $|C|_\circ^{\circ r}$ is 
\begin{equation*}
2\lceil (m-k)/2\rceil=\begin{cases}
m-k & \textrm{ if $m-k$ is even}\\
m-k+1 & \textrm{ if $m-k$ is odd.}
\end{cases}
\end{equation*}
Again by \eqref{eq37}, we know that all the eigenvalues except $\lambda_0(r)$ are repeated at least twice.
Since $|C|_\circ^{\circ r}$ is entrywise positive, at least one eigenvalue must be simple.
Hence $\lambda_0(r)$ is the only simple eigenvalue.
\end{proof}

\vskip.2in
\noindent {\it Acknowledgment.} The work of the author is supported by a SERB Women Excellence Award.
The author thanks Rajendra Bhatia and Roger Horn for their valuable suggestions.

\vskip.2in
\noindent {\it Acknowledgment.} The work of the author is supported by a SERB Women Excellence Award.
The author thanks Rajendra Bhatia and Roger Horn for their valuable suggestions.

\vskip.3in
\vskip.3in

\end{document}